\newcommand{\Xcomment}[1]{}
\newtheorem{theorem}{Theorem}[section]
\newtheorem{lemma}[theorem]{Lemma}
\newtheorem{corollary}[theorem]{Corollary}
\newtheorem{prop}[theorem]{Proposition}
\newcommand{\SEC}[1]{\ref{sec:#1}}  
\newcommand{\SSEC}[1]{\ref{ssec:#1}}  
\makeatletter \@addtoreset{equation}{section} \makeatother
\newenvironment{proof}{\noindent{\bf Proof}~}%
{\hfill$\qed$\medskip}
\def\qed{ \ \vrule width.1cm height.3cm depth0cm}
\newenvironment{numitem1}{\refstepcounter{equation}\begin{enumerate}%
\item[(\thesection.\arabic{equation})]}{\end{enumerate}}
\newcommand{\refeq}[1]{(\ref{eq:#1})}  
\renewcommand{\section}{\@startsection{section}{1}{0pt}%
{-3.5ex plus -1ex minus -.2ex}{2.3ex plus .2ex}%
{\normalfont\Large}}
\renewcommand{\subsection}{\@startsection{subsection}{2}{0pt}%
{-3.0ex plus -1ex minus -.2ex}{-1.5ex plus .2ex}%
{\normalfont\normalsize\bf}}
\renewcommand{\subsection}{\@startsection{subsection}{2}{0pt}%
{-3.0ex plus -1ex minus -.2ex}{1.5ex plus .2ex}%
{\normalfont\normalsize\bf}}
\renewcommand{\subsubsection}{\@startsection{subsubsection}{2}{0pt}%
{-2.0ex plus -1ex minus -.2ex}{-2.0ex plus .2ex}%
{\normalfont\normalsize\underline}}
\def\Rset{{\mathbb R}}
\def\Zset{{\mathbb Z}}
\def\Ascr{{\cal A}}
\def\Bscr{{\cal B}}
\def\Cscr{{\cal C}}
\def\Sscr{{\cal S}}
\def\Qbold{{\bf Q}}
\def\Sbold{{\bf S}}
\def\Inver{{\rm Inv}}
\def\bottom{{\rm bt}}
\def\Spec{{\rm Sp}}
\def\tilde{\widetilde}
\def\hat{\widehat}
\def\bar{\overline}
\def\Qst{Q^{\rm st}}
\def\Qant{Q^{\rm ant}}
\def\bd{\partial}
\def\Pack{{\rm Pac}}
\def\Zfr{Z^{\,\rm fr}}
\def\Zrear{Z^{\,\rm rear}}
\def\Zrim{Z^{\,\rm rim}}
\def\Cfr{C^{\,\rm fr}}
\def\Crear{C^{\,\rm rear}}
\begin{document}

 \title{Cubillages on cyclic zonotopes, membranes, \\
                and higher separation}

 \author{Vladimir I.~Danilov\thanks{Central Institute of Economics and
Mathematics of the RAS, 47, Nakhimovskii Prospect, 117418 Moscow, Russia;
email: danilov@cemi.rssi.ru.}
 \and
Alexander V.~Karzanov\thanks{Central Institute of Economics and Mathematics of
the RAS, 47, Nakhimovskii Prospect, 117418 Moscow, Russia; email:
akarzanov7@gmail.com. Corresponding author. }
  \and
Gleb A.~Koshevoy\thanks{The Institute for Information Transmission Problems of
the RAS, 19, Bol'shoi Karetnyi per., 127051 Moscow, Russia; email:
koshevoyga@gmail.com. Supported in part by grant RSF 16-11-10075. }
 }

\date{}

 \maketitle

 \begin{quote}
 {\bf Abstract.} \small
We study certain structural properties of fine zonotopal tilings, or
\emph{cubillages}, on cyclic zonotopes $Z(n,d)$ of an arbitrary dimension $d$
and their relations to $(d-1)$-separated collections of subsets of a set
$\{1,2,\ldots,n\}$. (Collections of this sort are well known as \emph{strongly
separated} ones when $d=2$, and as \emph{chord separated} ones when $d=3$.)
 \smallskip

{\em Keywords}\,: zonotope, cubillage, higher Bruhat order, strongly separated
sets, chord separated sets

\smallskip {\em MSC Subject Classification}\, 05E10, 05B45
 \end{quote}

\baselineskip=15pt
\parskip=2pt

\section{Introduction}  \label{sec:intr}

We consider a generalization of the notions of strongly separated and chord
separated set-systems. Let $n$ be a positive integer and denote the set
$\{1,2,\ldots,n\}$ by $[n]$.
  \Xcomment{
For a set $X\subseteq[n]$, let $\min(X)$ ($\max(X)$) denote the minimal (resp.
maximal) element of $X$, letting $\min(X)=\max(X):=0$ if $X=\emptyset$. For
subsets $A,B\subseteq[n]$, we will write $A<B$ if $\max(A)<\min(B)$.
  }
  \smallskip

\noindent\textbf{Definition.} ~Let $r\in[n-1]$. Two sets $A,B\subseteq [n]$ are
called $r$-\emph{separated} (from each other) if there is no sequence
$i_0<i_1<\cdots<i_{r+1}$ of elements of $[n]$ such that the elements with even
indices (namely, $i_0,i_2,\ldots$) and the elements with odd indices
($i_1,i_3,\ldots$) belong to different sets among $A-B$ and $B-A$ (where
$A'-B'$ denotes the set difference $\{i\colon A'\ni i\notin B'\}$). In other
words, one can choose $r'\le r$ integers (``separating points'') $a_1\le
a_2\le\cdots\le a_{r'}$ in $[n]$ such that the intervals $[a_i,a_{i+1}]$ with
$i$ even cover one of $A-B$ and $B-A$, while the ones with $i$ odd cover the
other of these sets, where $a_{r'+1}:=n$ and $[a,b]$ denotes
$\{a,a+1,\ldots,b\}$. Accordingly, a collection (set-system) $\Ascr\subseteq
2^{[n]}$ is called $r$-separated if any two of its members are such.
  \smallskip

We denote the set of all inclusion-wise maximal $r$-separated collections
$\Ascr$ in $2^{[n]}$ as $\Sbold_{n,r+1}$, and the maximal size $|\Ascr|$ of
such an $\Ascr$ by $s_{n,r+1}$ (for technical reasons, we prefer to use the
subscript pair $(n,r+1)$ rather than $(n,r)$). When all collections in
$\Sbold_{n,r+1}$ are of the same size, $\Sbold_{n,r+1}$ is said to be
\emph{pure}.

In particular, $\Sbold_{n,n}$ consists of the unique collection $2^{[n]}$
(since any two subsets of $[n]$ are $(n-1)$-separated), giving the simplest
purity case.

The concept of 1-separation was introduced, under the name of \emph{strong
separation}, by Leclerc and Zelevinsky~\cite{LZ} who proved the important fact
that
  \begin{numitem1} \label{eq:LZ}
for any $n\ge 2$, the set $\Sbold_{n,2}$ is pure (and $s_{n,2}$ equals
$\binom{n}{0}+ \binom{n}{1}+\binom{n}{2}=\frac12 n(n+1)+1$).
  \end{numitem1}

Recently an analogous purity result on 2-separation was shown by
Galashin~\cite{gal}:
  \begin{numitem1} \label{eq:gal}
for any $n\ge 3$, the set $\Sbold_{n,3}$ is pure (and $s_{n,3}=
\binom{n}{0}+\binom{n}{1}+ \binom{n}{2}+\binom{n}{3}$).
  \end{numitem1}

(In~\cite{gal}, 2-separated sets $A,B\subseteq[n]$ are called \emph{chord
separated}, which is justified by the observation that if $n$ points
$1,2,\ldots,n$ are disposed on a circumference $O$, in this order cyclically,
then there is a chord to $O$ separating $A-B$ from $B-A$.)

However, a nice purity behavior as above for $r$-separated set-systems with
$r=1,2$ is not extended in general to larger $r$'s, as it follows from profound
results on oriented matroids due to Galashin and Postnikov~\cite{GP}. Being
specified to $r$-separated set-systems, the following property is obtained.
  \begin{theorem} \label{tm:GP} {\rm~\cite{GP}}
~$\Sbold_{n,r+1}$ is pure if and only if $\min\{r,n-r\}\le 2$.
  \end{theorem}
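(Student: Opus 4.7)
The theorem splits into purity (when $\min\{r,n-r\}\le 2$) and failure of purity (when $\min\{r,n-r\}\ge 3$). My plan is to reduce the purity half to the already-stated results plus one short case analysis, and to handle the non-purity half by constructing an explicit small counterexample and propagating it by induction.

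For the purity direction, $r=1$ and $r=2$ are precisely~\refeq{LZ} and~\refeq{gal}, while $r=n-1$ gives $\Sbold_{n,n}=\{2^{[n]}\}$ as already noted in the text. The only remaining case is $r=n-2$, for which I would argue directly: a pair $\{A,B\}$ fails to be $(n-2)$-separated only if it admits an alternating sequence of length $n$ along $1<2<\cdots<n$, which forces $B=A^c$ with the partition alternating; up to swap there is a unique offending pair $\{A_0,A_0^c\}$. Hence every maximal $(n-2)$-separated collection is obtained from $2^{[n]}$ by removing exactly one of $A_0,A_0^c$, so they all share the common size $2^n-1$.

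For the non-purity direction, I must exhibit, for each $(n,r)$ with $3\le r\le n-3$, two inclusion-wise maximal $r$-separated collections of different cardinalities. The plan is a two-step strategy. First, build a base counterexample at $(n,r)=(6,3)$: the large collection will be the vertex set of a generic cubillage on the cyclic zonotope $Z(6,4)$, of common size $\sum_{k=0}^{4}\binom{6}{k}=57$; the small collection will come from a partial tiling structure (in the spirit of the membranes of the paper's title) that cannot be completed to a full cubillage yet whose vertex set is already inclusion-maximal as a $3$-separated family. Second, propagate to arbitrary admissible $(n,r)$ by an inductive element-appending operation that raises $n$ by one, together with a parallel variant that raises $r$ by one along with $n$, verifying that both maximality and the cardinality gap are preserved at every step.

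The main obstacle is the base-case construction: producing, and then certifying, an inclusion-maximal but deficient $3$-separated family in $2^{[6]}$. Certifying maximality requires ruling out every candidate additional set; certifying strict deficiency requires comparing against the uniform cubillage count. The conceptual reason such a family exists only for $r\ge 3$ is that in the cubillage model on $Z(n,r+1)$ every partial tiling extends to a cubillage when $r\le 2$ --- which is essentially the content of~\refeq{LZ} and~\refeq{gal} --- whereas for $r\ge 3$ there are genuine obstructions to completion, visible in the non-trivial mutation-graph topology of the higher Bruhat poset. Isolating one such obstruction explicitly is the crux of the proof; the inductive steps that propagate it to all admissible $(n,r)$ are then routine bookkeeping.
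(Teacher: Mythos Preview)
Your plan is correct and matches the paper's proof structure exactly: purity is handled just as you say (end of Section~\SEC{zon-cub}), and non-purity is an explicit base case at $(n,d)=(6,4)$ (Lemma~\ref{lm:Ascript}) propagated by two lemmas (Lemmas~\ref{lm:n+1} and~\ref{lm:n+1d+1}) implementing precisely your two inductive steps of raising $n$ and of raising $(n,d)$ together. One calibration: the paper's base-case counterexample is found by bare-hands combinatorics---adjoining three of the twelve subsets of $[6]$ lying outside $\Spec(Z(6,4))$ to obtain an inclusion-maximal $3$-separated collection of size $55<57$---rather than via a membrane or partial-tiling obstruction, and the propagation lemmas are proved using the contraction/expansion operations on pies developed in Section~\SEC{membr}, so ``routine bookkeeping'' slightly undersells the geometric machinery required.
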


One purpose of this paper is to give another proof of Theorem~\ref{tm:GP}
(which looks rather transparent modulo appealing to~\refeq{LZ}
and~\refeq{gal}).

In fact, the content of this paper is wider. In particular, we are going to
demonstrate representable cases of extendable and non-extendable set-systems.
Here we say that $\Ascr\subseteq 2^{[n]}$ is $(n,r+1)$-\emph{extendable} if
there exists  a maximal by size $r$-separated collection in $2^{[n]}$ including
$\Ascr$. (So $\Sbold_{n,r+1}$ is pure if and only if any $r$-separated
set-system in $2^{[n]}$ is $(n,r+1)$-extendable.)

Our study of separated set-systems is based on a geometric approach whose
theoretical grounds were originated in the classical work by Manin and
Schechtman~\cite{MS} where \emph{higher Bruhat orders}, generalizing weak
Bruhat ones, were introduced and well studied. (Recall that the higher Bruhat
order for $(n,d)$ compares certain (so-called ``packet admissible'') total
orders on the set $\binom{[n]}{d}$ of $d$-elements subsets of $[n]$, and it
turns into the weak one when $d=1$, which compares permutations on $[n]$.)
Subsequently Voevodskij and Kapranov~\cite{VK} and Ziegler~\cite{zieg} gave
nice geometric interpretations and established additional important results.

Based on these sources, we deal with a \emph{cyclic zonotope} $Z=Z(n,d)$, that
is the Minkowski sum of $n$ segments in $\Rset^d$ forming a \emph{cyclic
configuration}, and consider a fine zonotopal tiling, that is a subdivision $Q$
of $Z$ into parallelotopes; we call it a \emph{cubillage} for short. The
vertices of $Q$ are associated, in a natural way, with subsets of $[n]$,
forming a collection $\Spec(Q)\subseteq 2^{[n]}$, called the \emph{spectrum} of
$Q$. In the special case $d=2$, $Q$ is viewed as a rhombus tiling on a zonogon,
and it is well known due to~\cite{LZ} that the spectra of these are exactly the
maximal strongly separated set-systems in $2^{[n]}$. Similarly, the spectra of
cubillages on $Z(n,3)$ are exactly the maximal chord separated set-systems in
$2^{[n]}$, as is shown in~\cite{gal}. It turned out that this phenomenon is
extended to an arbitrary $d$: the cubillages $Q$ on $Z(n,d)$ are bijective to
the \emph{maximal by size} $(d-1)$-separated set-systems $\Sscr$ in $2^{[n]}$,
with the equality $\Spec(Q)=\Sscr$; see~\cite{GP}.

This paper is organized as follows. Section~\SEC{zon-cub} reviews definitions
and basic properties of cyclic zonotopes and cubillages. Section~\SEC{6,4}
gives a short proof of the non-purity of $\Sbold_{6,4}$, which is the crucial
case in our method of proof of Theorem~\ref{tm:GP}. As a by-product, we obtain
non-(n,4)-extendable 3-separated collections consisting of only three sets. To
show the other non-purity cases in Theorem~\ref{tm:GP}, we need to use
additional notions and constructions, which generalize those exhibited
in~\cite{DKK} for $d=3$ and are discussed in Section~\SEC{membr}. Here we
introduce a \emph{membrane} in a cubillage $Q$ on $Z(n,d)$, to be a special
$(d-1)$-dimensional subcomplex $M$ in $Q$ (when the latter is regarded as the
corresponding polyhedral complex). An important fact is that any cubillage on
$Z(n,d-1)$ can be lifted as a membrane in some cubillage on $Z(n,d)$. Also we
describe nice operations on cubillages on $Z(n,d)$ (called \emph{contraction}
and \emph{expansion} ones) that produce cubillages on $Z(n-1,d)$ and
$Z(n+1,d)$.

Section~\SEC{non-pure} utilizes this machinery to show relations between
$(n,d)$-, $(n+1,d)$-, and $(n+1,d+1)$-extendable set-systems. As a consequence,
we easily prove the remaining non-purity cases in Theorem~\ref{tm:GP}, relying
on the above result for $(n,d)=(6,4)$. Also we demonstrate in this section one
interesting class of extendable set-systems (in Proposition~\ref{pr:BD}) and
raise two open questions. Section~\SEC{invers} is devoted to additional results
involving \emph{inversions} of membranes. These objects arise as a natural
generalization of the classical notion of inversions for permutations, and
their definition for an arbitrary $(n,d)$ goes back to Manin and
Schechtman~\cite{MS}. In particular, we show that for two membranes $M$ and
$M'$ in the same cubillage on $Z(n,d)$, if any inversion of $M$ is an inversion
of $M'$, then $\Spec(M)\cup\Spec(M')$ is $(d-1)$-separated (see
Theorem~\ref{tm:MMM}).


\section{Cyclic zonotopes and cubillages}  \label{sec:zon-cub}

The objects that we deal with live in the euclidean space $\Rset^d$ of
dimension $d>1$. A \emph{cyclic configuration} of size $n\ge d$ is meant to be
an ordered set $\Xi$ of $n$ vectors $\xi_1=\xi(t_i),\ldots,\xi_n=\xi(t_n)$ in
$\Rset^d$ lying on the Veronese curve $\xi(t)=(1,t,t^2,\ldots,t^{d-1})$,
$t\in\Rset$, and satisfying $t_1<\cdots <t_n$. A useful property of $\Xi$ is
that
  \begin{numitem1} \label{eq:positive}
any $d$ vectors $\xi_{i(1)},\ldots,\xi_{i(d)}$ with $i(1)<\cdots<i(d)$ are
independent and, moreover,  $\det(A)>0$, where $A$ is the matrix whose $j$-th
column is $\xi_{i(j)}$.
  \end{numitem1}

In addition, we will also assume that $\Xi$ is $\Zset_2$-independent (i.e., all
combinations of vectors of $\Xi$ with coefficients 0,1 are different).

The configuration $\Xi$ generates the (\emph{cyclic}) \emph{zonotope}
$Z=Z(\Xi)$ in $\Rset^d$, the polytope represented as the Minkowski sum of line
segments $[0,\xi_i]$, $i=1,\ldots,n$. An object of our interest is a \emph{fine
zonotopal tiling} on $Z$, that is a subdivision $Q$ of $Z$ into $d$-dimensional
parallelotopes of which any two intersecting ones share a common face, and each
facet (a face of codimension 1) of the boundary $\bd(Z)$ of $Z$ is contained in
one of these parallelotopes. For brevity, we liberally refer to these
parallelotopes as \emph{cubes}, and to $Q$ as a \emph{cubillage}. In fact,
depending on the context, we may think of a cubillage $Q$ in two ways: either
as a set of $d$-dimensional cubes (and may write $C\in Q$ for a cube $C$ in
$Q$) or as the corresponding polyhedral complex. One can see that
  \begin{numitem1} \label{eq:cube}
each cube in $Q$ is viewed as
  $$
  \sum\nolimits_{b\in X} \xi_b + \left\{\sum(\lambda_{a(i)}\xi_{a(i)}\colon
  \; 0\le\lambda_{a(i)}\le 1,\; i=1,\ldots,d)\right\}
  $$
\noindent for some $a(1)<\cdots <a(d)$ and $X\subseteq [n]-a(1)a(2)\cdots
a(d)$.
  \end{numitem1}
Hereinafter, for a subset $\{a,\ldots,a'\}$ of $[n]$, we use the abbreviated
notation $a\cdots a'$. When $X\subset[n]$ and $a\cdots a'$ are disjoint, their
union may be denoted as $Xa\cdots a'$. Also for a set $S$ and element $i\in S$,
we may write $S-i$ for $S-\{i\}$.

For a cube $C$ in~\refeq{cube}, we say that the set $a(1)\cdots a(d)$ is the
\emph{type} of $C$, denoted as $\tau(C)$. Also, regarding the first coordinate
$x_1$ of a vector (point) $x=(x_1,\ldots,x_d)\in\Rset^d$ as its \emph{height},
we denote the lowest point $\sum_{b\in X} \xi_b$ of $C$ by $\bottom(C)$, called
the \emph{bottom} of $C$. The cells of dimensions 0 and 1 in $Q$ are called
\emph{vertices} and \emph{edges}, respectively. When needed, each edge $e$ is
directed so as to be a parallel translation of corresponding generating vector
$\xi_i$, and we say that $e$ is an edge of \emph{color} $i$, or an
$i$-\emph{edge}. This forms a directed graph on the vertices of $Q$, denoted as
$G_Q=(V_Q,E_Q)$.

The subsets $X\subseteq[n]$ are naturally identified with the corresponding
points $\sum_{b\in X} \xi_b$ in $Z$ (which are different due the
$\Zset_2$-independence of $\Xi$). This represents each vertex of $Q$ as a
subset of $[n]$, and the collection of these subsets is called the
\emph{spectrum} of $Q$ and denoted by $\Spec(Q)$.

Note that structural properties of cubillages depend on $n$ and $d$, but the
choice of a cyclic configuration $\Xi$ for these parameters is not important in
essence; so we may speak of cubillages $Q$ on a generic cyclic zonotope,
denoted as $Z(n,d)$. There are known a number of nice properties of $Q$. Among
those, two rather elementary ones are as follows:
  \begin{numitem1} \label{eq:dif_types}
all types $\tau(C)$ of cubes $C\in Q$ are different and range the set
$\binom{[n]}{d}$ of $d$-element subsets of $[n]$ (so there are exactly
$\binom{n}{d}$ cubes in $Q$); and
  \end{numitem1}
  \begin{numitem1} \label{eq:spec}
$|\Spec(Q)|=\binom{n}{0}+ \binom{n}{1}+\cdots +\binom{n}{d}$.
  \end{numitem1}

One more useful property of cubillages (which is shown for $d=3$
in~\cite[Prop.~3.5]{DKK}) and can be straightforwardly extended to an arbitrary
$d$) is:
  \begin{numitem1} \label{eq:adjacency}
suppose that for disjoint subsets $X,A$ of $[n]$, a cubillage $Q$ contains the
vertices of the form $X\cup A'$ for all $A'\subseteq A$; then these vertices
belong to a cube in $Q$.
  \end{numitem1}
In particular, if $Q$ has vertices $X$ and $Xi$, where $i\in[n]-X$, then $Q$
contains the edge connecting these vertices.

A less trivial fact is shown in~\cite{GP}; it says that
  \begin{numitem1} \label{eq:corresp}
the correspondence $Q\mapsto \Spec(Q)$ gives a bijection between the set
$\Qbold_{n,d}$ of cubillages on $Z(n,d)$ and the set $\Sbold^\ast_{n,d}$ of
maximal \emph{by size} $(d-1)$-separated collections in $2^{[n]}$ (in
particular, $|\Spec(Q)|=s_{n,d}$).
  \end{numitem1}

In light of~\refeq{LZ} and~\refeq{gal}, $\Sbold^\ast_{n,d}=\Sbold_{n,d}$ when
$d=2,3$, and in view of~\refeq{corresp}, all maximal strongly separated and
chord separated set-systems in $2^{[d]}$ are represented by the spectra of
corresponding cubillages; these facts were established for $d=2$ in~\cite{LZ}
(using equivalent terms of pseudo-line arrangements), and for $d=3$
in~\cite{gal}. On the other hand, Theorem~\ref{tm:GP} asserts that
$\Sbold^\ast_{n,d}\ne \Sbold_{n,d}$ when $d\ge 4$ and $n\ge d+2$.
 \smallskip

In our further analysis we will use additional facts on the structure of the
boundary $\bd(Z)$ of a zonotope $Z=Z(n,d)$. Let us say that a set $X\subseteq
[n]$ is a $k$-\emph{pieced cortege} if it is the union of $k$ intervals
(including the case $k=0$). As a non-difficult exercise, one can obtain the
following description for the collection $\Spec(Z)$ ($=\Spec(\bd(Z))$) of
subsets of $[n]$ represented by the vertices of $Z$:
  \begin{numitem1} \label{eq:bdZ}
for $Z=Z(n,d)$, $\Spec(Z)$ consists of exactly those sets $X\subseteq[n]$ that
are $(d-1)$-separated from any subset of $[n]$; specifically: when $d$ is even,
$\Spec(Z)$ is formed by all $d/2$-pieced corteges containing at least one of
the elements 1 and $n$ and all $k$-pieced corteges with $k<d/2$, while when $d$
is odd, $\Spec(Z)$ is formed by all $(d+1)/2$-pieced corteges containing both 1
and $n$ and all $k$-pieced corteges with $k\le (d-1)/2$.
  \end{numitem1}

In particular, $\Spec(Z)$ is included in any collection in $\Sbold_{n,d}$.

In case $n=d$, the zonotope $Z$ turns into one cube and the purity of
$\Sbold_{n,n}$ is trivial. And in case $n=d+1$, one can conclude
from~\refeq{bdZ} that there are exactly two subsets of $[n]$ that do not belong
to $\Spec(Z)$, one being formed by the odd elements, and the other by the even
elements of $[n]$, i.e., the sets $X=135\ldots$ and $Y=246\ldots$ . Clearly
they are not $(d-1)$-separated from each other. Therefore, $\Sbold_{n,n-1}$
consists of two collections $\Spec(Z_{n,n-1})\cup \{X\}$ and
$\Spec(Z_{n,n-1})\cup \{Y\}$, implying that $\Sbold_{n,n-1}$ is pure. This
together with~\refeq{LZ} and~\refeq{gal} gives ``if'' part of
Theorem~\ref{tm:GP}.

The non-purity cases of this theorem (giving ``only if'' part) are discussed in
Sections~\SEC{6,4} and~\SEC{non-pure}.

  \section{Case $(n,d)=(6,4)$} \label{sec:6,4}

This case is crucial and will be used as a base to handle the other non-purity
cases in Theorem~\ref{tm:GP} (in Sect.~\SEC{non-pure}).

Consider $Z=Z(6,4)$. By~\refeq{bdZ}, $\Spec(Z)$ consists of all intervals and
all 2-pieced corteges containing 1 or 6. A direct enumeration shows that the
number of these amounts to 52. Therefore, $2^6-52=12$ subsets of $[6]$ are not
in $\Spec(Z)$, namely:
  \begin{numitem1} \label{eq:12sets}
~24, 245, 25, 235, 35, 135, 1356, 136, 1346, 146, 1246, 246.
  \end{numitem1}
(Recall that $a\cdots b$ stands for $\{a,\ldots,b\}$.) Let $A_i$ denote $i$-th
member in this sequence (so $A_1=24$ and $A_{12}=246$). Form the collection
  $$
  \Ascr:=\Spec(Z)\cup\{A_1,A_5,A_9\}.
  $$
It consists of $52+3=55$ sets, whereas the number $s_{6,4}$ is equal to
$\binom{6}{0}+\binom{6}{1}+ \binom{6}{2}+\binom{6}{3}+\binom{6}{4}=57$. Now the
non-purity of $\Sbold_{6,4}$ is implied by the following
  \begin{lemma} \label{lm:Ascript}
$\Ascr$ is a maximal 3-separated collection in $2^{[6]}$.
  \end{lemma}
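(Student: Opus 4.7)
The plan is to verify the two defining properties of an inclusion-maximal 3-separated collection: (a) $\Ascr$ itself is 3-separated, and (b) every subset of $[6]$ outside $\Ascr$ fails to be 3-separated from some member of $\Ascr$.

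For (a), by~\refeq{bdZ} every $X \in \Spec(Z)$ is 3-separated from every subset of $[6]$, so the task reduces to checking pairwise 3-separation among the three extra sets $A_1 = 24$, $A_5 = 35$, $A_9 = 1346$. Here I would use the elementary remark that if $|A \triangle B| \le r+1$ then $A$ and $B$ are automatically $r$-separated, since any alternating witness $i_0 < \cdots < i_{r+1}$ would have to consist of $r+2$ distinct elements of $A \triangle B$. A direct computation gives $|A_1 \triangle A_5| = |A_1 \triangle A_9| = |A_5 \triangle A_9| = 4 \le r+1 = 4$, which closes part~(a) at once.

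For (b), the subsets of $[6]$ not already in $\Ascr$ are precisely the nine sets $A_2, A_3, A_4, A_6, A_7, A_8, A_{10}, A_{11}, A_{12}$ from the list~\refeq{12sets}, so I just need to exhibit, for each one, an explicit 5-element alternating sequence against $A_1$, $A_5$, or $A_9$. A clean partition presents itself: $\{A_6, A_7, A_8\}$ clash with $A_1$ (witnesses of the form $1,2,3,4,5$ or $1,2,3,4,6$); $\{A_{10}, A_{11}, A_{12}\}$ clash with $A_5$ (witnesses $1,3,4,5,6$ or $2,3,4,5,6$); and $\{A_2, A_3, A_4\}$ clash with $A_9$ (witnesses $1,2,3,5,6$ or $1,2,4,5,6$). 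In each case one reads off directly that the five indices chosen alternate between the two pieces of the symmetric difference.

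The dihedral symmetry $k \mapsto 7-k$ of $[6]$ preserves $r$-separation, swaps $A_1 \leftrightarrow A_5$, fixes $A_9$, and induces the orbits $\{A_2, A_4\}$, $\{A_3\}$, $\{A_6, A_{12}\}$, $\{A_7, A_{11}\}$, $\{A_8, A_{10}\}$ on the nine forbidden sets, so it roughly halves the bookkeeping in (b). The only genuine obstacle is this bookkeeping; conceptually the lemma is just an inspection of symmetric differences in a small case, with no deeper argument needed.
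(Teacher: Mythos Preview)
Your proof is correct and follows essentially the same route as the paper's: both reduce part~(a) to the bound $|A\triangle A'|\le 4$ for the three extra sets, and both prove maximality via exactly the same partition of clashes ($A_1$ against $A_6,A_7,A_8$; $A_5$ against $A_{10},A_{11},A_{12}$; $A_9$ against $A_2,A_3,A_4$). You supply explicit 5-term witnesses and a symmetry observation where the paper simply says ``routine verification'', but there is no substantive difference.
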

  \begin{proof}
~By~\refeq{bdZ}, any two $X\in \Spec(Z)$ and $Y\in\Ascr$ are 3-separated.
Observe that $|A_{i-1}\triangle A_i|=1$ for any $1\le i\le 12$ (where
$A_0:=A_{12}$ and $A\triangle B$ denotes the symmetric difference
$(A-B)\cup(B-A)$). Then any $A,A'\in\{A_1,A_5,A_9\}$ satisfy $|A\triangle
A'|\le 4$. This implies that $A$ and $A'$ are 3-separated. Therefore, the
collection $\Ascr$ is 3-separated.

The maximality of $\Ascr$ follows from the observation that adding to $\Ascr$
any member of $\{A_i: 1\le i\le 12,\; i\ne 1,5,9\}$ would violate the
3-separation. Indeed, a routine verification shows that $A_1$ is not
3-separated from any of $A_6,A_7,A_8$, and similarly for $A_5$ and
$\{A_{10},A_{11},A_{12}\}$, and for $A_9$ and $\{A_2,A_3,A_4\}$.
 \end{proof}

\noindent \textbf{Remark 1.} To visualize a verification in the above proof,
one can use the circular diagram illustrated in the picture below where the
sets from~\refeq{12sets} are disposed in the cyclic order. Here the sets
$A_1,A_5,A_9$ are drawn in boxes and connected by lines with those sets where
the 3-separation is violated. Note that, instead of $A_1,A_5,A_9$, one could
take in the lemma any triple of the form $A_i,A_{i+4},A_{i+8}$ (taking indices
modulo 12).
  \medskip

 \vspace{-0.0cm}
\begin{center}
\includegraphics{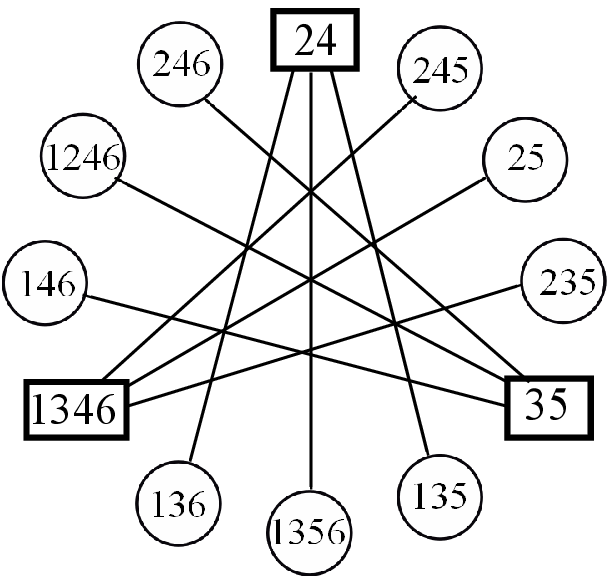}
\end{center}
\vspace{-0.0cm}

In conclusion of this section recall that a collection in $2^{[n]}$ is called
$(n,d)$-\emph{extendable} if it can be extended to a \emph{maximal by size}
$(d-1)$-separated collection in $2^{[n]}$, or, equivalently (in view
of~\refeq{corresp}), if there exists a cubillage on $Z(n,d)$ whose spectrum
includes the given collection. An immediate consequence of
Lemma~\ref{lm:Ascript} is the existence of small non-extendable 3-separated
collections.
  \begin{corollary} \label{cor:3points}
Any 3-separated triple of the form $\{A_i,A_{i+4},A_{i+8}\}$, e.g.
$\{24,35,1346\}$, is not (6,4)-extendable (defining $A_{i'}$ as above and
taking indices modulo 12).
  \end{corollary}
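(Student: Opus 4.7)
The plan is to reduce the corollary to Lemma~\ref{lm:Ascript} together with the fact, recorded right after~\refeq{bdZ}, that $\Spec(Z)$ is contained in \emph{every} collection of $\Sbold_{6,4}$, in particular in every maximal-by-size $3$-separated collection in $2^{[6]}$.

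Fix $i$ and put $T_i:=\{A_i,A_{i+4},A_{i+8}\}$ and $\Ascr_i:=\Spec(Z)\cup T_i$, with indices taken modulo~$12$. I would argue by contradiction: suppose $T_i$ is $(6,4)$-extendable, so there is a maximal-by-size $3$-separated collection $\Bscr\subseteq 2^{[6]}$ with $T_i\subseteq\Bscr$. Since $\Spec(Z)\subseteq\Bscr$ by the observation just cited, I conclude $\Ascr_i\subseteq\Bscr$.

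The next step is to show that $\Ascr_i$ is already \emph{inclusion-maximal} among $3$-separated collections in $2^{[6]}$, forcing $\Bscr=\Ascr_i$. For $i=1$ this is exactly Lemma~\ref{lm:Ascript}; for general $i$ one invokes the cyclic pattern exhibited in Remark~1. Concretely, a direct inspection of the listing~\refeq{12sets} shows that two members $A_j,A_k$ of that list fail to be $3$-separated from each other precisely when $|j-k|\bmod 12\in\{5,6,7\}$. This relation is manifestly invariant under the index rotation $A_j\mapsto A_{j+1}$, so the verification used in the proof of Lemma~\ref{lm:Ascript} goes through verbatim for each $i$: each remaining set $A_j$ with $j\notin\{i,i+4,i+8\}$ has index at cyclic distance $5$, $6$, or $7$ from one of $i,i+4,i+8$, hence fails $3$-separation with the corresponding member of $T_i$ and cannot be adjoined to $\Ascr_i$. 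I expect this cyclic $\{5,6,7\}$-pattern to be the only (small) obstacle; it can be settled either by a symmetry argument on the twelve sets or by a routine enumeration.

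With $\Bscr=\Ascr_i$ in hand, the contradiction is immediate from a size count: $|\Bscr|=|\Ascr_i|=|\Spec(Z)|+3=52+3=55$, whereas $|\Bscr|=s_{6,4}=\binom{6}{0}+\binom{6}{1}+\binom{6}{2}+\binom{6}{3}+\binom{6}{4}=57$. Hence no such $\Bscr$ exists, and $T_i$ is not $(6,4)$-extendable, as required.
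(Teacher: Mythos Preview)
Your proposal is correct and follows essentially the same route as the paper's own justification: assume extendability, use the inclusion $\Spec(Z(6,4))\subseteq\Bscr$ noted after~\refeq{bdZ} to get $\Ascr_i\subseteq\Bscr$, invoke the maximality of $\Ascr_i$ (Lemma~\ref{lm:Ascript} together with the cyclic symmetry of Remark~1), and compare sizes $55<57$. Your explicit formulation of the obstruction pattern (non-$3$-separation exactly at cyclic index distance $5,6,7$) is a helpful sharpening of what Remark~1 asserts, though as you note it still requires the same routine check.
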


(Here we take into account that any maximal 3-separated collection in $2^{[6]}$
includes $\Spec(Z(6,4))$.) Note that, in view of Lemma~\ref{lm:n+1} in
Sect.~\SEC{non-pure}, this corollary implies that $\{A_i,A_{i+4},A_{i+8}\}$ is
not $(n,4)$-extendable for any $n\ge 6$.

  \section{Membranes and pies} \label{sec:membr}

For further purposes, we need additional notions, borrowing terminology
from~\cite{DKK}.


 \subsection{Membranes.}  \label{ssec:membr}
Let $\pi:\Rset^d\to\Rset^{d-1}$ be the projection along the last coordinate
vector, i.e., sending $x=(x_1,\ldots,x_d)$ to $(x_1,\ldots,x_{d-1})$. Then
$\pi(\Xi)=(\pi(\xi_1),\ldots,\pi(\xi_n))$ is again a cyclic configuration and
$Z(\pi(\Xi))$ forms a $(d-1)$-dimensional cyclic zonotope. We may use notation
$Z(n,d-1)$ for $Z(\pi(\Xi))$ and call it the projection of $Z(n,d)$.
  \smallskip

\noindent\textbf{Definition.} ~Let $Q$ be a cubillage on $Z(n,d)$. By a
\emph{membrane} in $Q$ we mean a subcomplex $M$ of $Q$ that is bijectively
projected by $\pi$ to $Z(n,d-1)$.
  \smallskip

In particular, $M$ has dimension $(d-1)$, each facet of $M$ is projected to a
$(d-1)$-dimensional ``cube'' (represented as in~\refeq{cube} where each
$\xi_\bullet$ should be replaced by $\pi(\xi_\bullet)$), and these ``cubes''
constitute a cubillage of $Z(n,d-1)$, whence the collection $\Spec(M)$ is
$(d-2)$-separated (cf.~\refeq{corresp}). A converse property says that any
cubillage can be lifted as a membrane into some cubillage of the next
dimension:
   \begin{numitem1} \label{eq:lift}
for any cubillage $Q'$ on $Z(n,d-1)$, there exists a cubillage $Q$ on $Z(n,d)$
and a membrane $M$ in $Q$ such that $\pi(M)=Q'$.
  \end{numitem1}
(This fact can be deduced from results on higher Bruhat orders and related
aspects in~\cite{MS,VK,zieg}. See also~\cite[Sec.~5]{DKK} for a direct
construction when $d=3$.)

When the choice of $Q$ as in~\refeq{lift} is not important for us, we may think
of the (abstract) membrane $M$ in $Z(n,d)$ representing a cubillage $Q'$ on
$Z(n,d-1)$, saying that $M$ \emph{is obtained by lifting} $Q'$. (To construct
such an $M$, one should take the points in $Z(n,d)$ representing the same
subsets of $[n]$ as those in $\Spec(Q')$ and then extend the corresponding
$2^{d-1}$-element subsets of these points into $(d-1)$-dimensional ``cubes'';
cf.~\refeq{adjacency}).

Two membranes are of an especial interest. For a closed set $X$ of points in
$Z=Z(n,d)$, let $X^{\rm fr}$ (resp. $X^{\rm rear}$) be the subset of points of
$X$ ``seen'' in the direction of $d$-th coordinate vector $e_d$ (resp. $-e_d$),
i.e., such that for each $x'\in\pi(X)$, this subset contains the point
$x\in\pi^{-1}(x')\cap X$ with the minimal (resp. maximal) value $x_d$. We call
it the \emph{front} (resp. \emph{rear}) side of $X$. When $X=Z$, the sides
$\Zfr$ and $\Zrear$ (respecting their cell structures) are membranes of any
cubillage on $Z$. The subcomplex $\Zfr\cap\Zrear$ is called the \emph{rim} of
$Z$ and denoted as $\Zrim$.

Borrowing terminology from~\cite{DKK}, we refer to $\pi(\Zfr)$ and
$\pi(\Zrear)$ are the \emph{standard} and \emph{anti-standard} cubillages on
$Z(n,d-1)$, denoted as $\Qst_{n,d-1}$ and $\Qant_{n,d-1}$, respectively.

As a refinement of~\refeq{bdZ}, one can characterize the spectra of $\Zrim$,
$\Zfr$ and $\Zrear$ for $Z=Z(n,d)$ as follows (a check-up is routine and we
omit it here):
   \begin{numitem1} \label{eq:Zfr-Zrear}
  \begin{itemize}
\item[(i)] when $d$ is even, $\Spec(\Zrim)$ consists of all $d/2$-pieced
corteges containing both elements 1 and $n$, and all $k$-pieced corteges with
$k<d/2$, whereas when $d$ is odd, $\Spec(\Zrim)$ consists of all
$(d-1)/2$-pieced corteges containing at least one of 1 and $n$, and all
$k$-pieced corteges with $k<(d-1)/2$;
\item[(ii)] $\Spec(\Zfr)-\Spec(\Zrim)$ consists of all $d/2$-pieced corteges containing
the element 1 but not $n$ when $d$ is even, and consists of all
$(d-1)/2$-pieced corteges containing none of 1 and $n$ when $d$ is odd;
\item[(iii)] $\Spec(\Zrear)-\Spec(\Zrim)$ consists of all $d/2$-pieced corteges containing
the element $n$ but not $1$ when $d$ is even, and consists of all
$(d+1)/2$-pieced corteges containing both 1 and $n$ when $d$ is odd.
  \end{itemize}
  \end{numitem1}


 \subsection{Pies, contraction and expansion.}  \label{ssec:pies}
For a cubillage $Q$ on $Z(n,d)$ and $i\in[n]$, let $\Pi_i=\Pi_i(Q)$ be the
subcomplex of $Q$ formed by the cubes $C$ having an edge of color $i$, called
$i$-\emph{cubes}. We refer to $\Pi_i$ as the $i$-\emph{pie} in $Q$. It has the
following nice properties (which are shown by attracting standard topological
reasonings):
   \begin{numitem1} \label{eq:pie}
   \begin{itemize}
\item[(i)] ~$\Pi_i$ is representable as the ``direct Minkowski sum''
$\{x=\beta+\alpha \colon \beta\in B_i,\; \alpha\in S_i\}$, where $B_i$ is a
subcomplex of $Q$ homeomorphic to a $(d-1)$-dimensional ball and $S_i$ is the
segment $[0,\xi_i]$;
\item[(ii)] ~removing from $Q$ the point set $\Pi-(B_i\cup B'_i)$, where
$B'_i:=B_i+\xi_i$, produces two connected components $R$ and $R'$ containing
$B_i$ and $B'_i$, respectively.
\item[(iii)] ~gluing $R$ with $R'$ shifted by $-\xi_i$, we obtain
a cubillage on the zonotope $Z(\Xi-\xi_i)$; it is denoted as $Q/i$ and called
the $i$-\emph{contraction} of $Q$;
\item[(iv)] ~$\Spec(Q/i)$ consists of all sets $X\subseteq[n]-i$ such
that at least one of $X,Xi$ is in $\Spec(Q)$.
  \end{itemize}
  \end{numitem1}

When $i=n$, the pie structure becomes more transparent. Namely, the maximality
of color $n$ in the type of each cube in $\Pi_n$ provides that the ball $B_n$
($B'_n$) is contained in the front (resp. rear) side of $\Pi_n$ (a similar fact
is also true for $i=1$ but need not hold when $1<i<n$). This implies that
  \begin{numitem1} \label{eq:Pi_n}
~$B_n$ is a membrane in the reduced cubillage ($n$-contraction) $Q/n$.
  \end{numitem1}
In other words, the $n$-contraction operation applied to $Q$, defined
by~(ii),(iii) in~\refeq{pie}, transforms the pie $\Pi_n$ into a membrane of
$Q/n$. A converse operation blows a membrane into an $n$-pie.

More precisely, let $M$ be a membrane in a cubillage $Q'$ on the zonotope
$Z'=Z({n-1},d)$. Define $Z^-(M)$ ($Z^+(M)$) to be the part of $Z'$ between
$(Z')^{\rm fr}$ and $M$ (resp. between $M$ and $(Z')^{\rm rear}$) and define
$Q^-(M)$ ($Q^+(M)$) to be the subcubillage of $Q'$ contained in $Z^-(M)$ (resp.
$Z^+(M)$). The $n$-\emph{expansion} operation for $(Q',M)$ consists in shifting
$Z^+(M)$ equipped with $Q^+(M)$ by the vector $\xi_n$ and filling the ``space
between'' $M$ and $M+\xi_n$ by the corresponding set of $n$-cubes. More
precisely, each $(d-1)$-dimensional cube $C'$ (having type $\tau(C')$ and
bottom vertex $\bottom(C'))$ in $M$ generates the $n$-dimensional cube $C=C'+
[0,\xi_n]$; so $\tau(C)=\tau(C')\cup\{n\}$ and $\bottom(C)=\bottom(C')$. Then
combining $Q^-(M)$ with the shifted subcubillage $Q^+(M)+\xi_n$ and the
``blowed membrane'' $M+[0,\xi_n]$ (forming an $n$-pie), we obtain a cubillage
on $Z(n,d)$. This cubillage is called the $n$-\emph{expansion} of $Q'$ using
$M$ and denoted as $Q_n(Q',M)$.

The $n$-contraction and $n$-expansion operations are naturally related to each
other (as a straightforward generalization to an arbitrary $d$ of
Proposition~3.4 from~\cite{DKK}):
  \begin{numitem1} \label{eq:contr-exp}
   \begin{itemize}
 \item[(i)] the correspondence $(Q',M)\mapsto Q_n(Q',M)$, where $Q'$ is a cubillage on
$Z({n-1},d)$ and $M$ is a membrane in $Q'$, gives a bijection between the set
of such pairs $(Q',M)$ in $Z(n-1,d)$ and the set $\Qbold_{n,d}$ of cubillages
on $Z(n,d)$;
  \item[(ii)]  under this correspondence, $Q'$ is the $n$-contraction $Q/n$ of
$Q=Q_n(Q',M)$ and $M$ is the image of the $n$-pie in $Q$ under the
$n$-contraction operation.
  \end{itemize}
  \end{numitem1}

  \section{Other non-purity cases} \label{sec:non-pure}

Return to proving ``only if'' part of Theorem~\ref{tm:GP}. For any $(n,d)$ with
$\min\{d-1,\,n-d+1\}\ge 3$, we have $n-6\ge d-4\ge 0$. Therefore, ``only if''
part of Theorem~\ref{tm:GP} (with $r$ replaced by $d-1$) will follow from
Lemma~\ref{lm:Ascript} concerning $(n,d)=(6,4)$ and the next two assertions on
lifting set-systems in $2^{[n]}$.
  \begin{lemma}  \label{lm:n+1}
Let $\Ascr\subseteq 2^{[n]}$. Then $\Ascr$ is $(n,d)$-extendable if and only if
$\Ascr$ is $(n+1,d)$-extendable.
  \end{lemma}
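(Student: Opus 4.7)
The plan is to recast $(n,d)$- and $(n+1,d)$-extendability in the language of cubillages via the bijection~\refeq{corresp}: both amount to the existence of a cubillage on the appropriate cyclic zonotope whose spectrum contains~$\Ascr$. The passage between $Z(n,d)$ and $Z(n+1,d)$ is then effected by the $(n+1)$-contraction and $(n+1)$-expansion operations of Section~\SSEC{pies} and their correspondence~\refeq{contr-exp}.

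For the direction $(n,d)\Rightarrow(n+1,d)$, suppose $Q\in\Qbold_{n,d}$ satisfies $\Spec(Q)\supseteq\Ascr$. Since the rear side $\Zrear$ of $Z(n,d)$ is a membrane in $Q$, one may apply the $(n+1)$-expansion with the choice $M:=\Zrear$. With this choice the ``upper'' region $Z^+(M)$ degenerates to the rear surface itself, whence $Q^-(M)=Q$ and no vertex of $Q$ gets translated by $\xi_{n+1}$. Reading the spectrum of $Q^+:=Q_{n+1}(Q,M)$ off the expansion construction gives
\[
\Spec(Q^+)=\Spec(Q)\cup\{X\cup\{n+1\}:X\in\Spec(\Zrear)\}\supseteq\Spec(Q)\supseteq\Ascr,
\]
so $Q^+$ witnesses $(n+1,d)$-extendability of $\Ascr$.

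For the direction $(n+1,d)\Rightarrow(n,d)$, suppose $P\in\Qbold_{n+1,d}$ satisfies $\Spec(P)\supseteq\Ascr$, and form the $(n+1)$-contraction $Q:=P/(n+1)\in\Qbold_{n,d}$. By clause~(iv) of~\refeq{pie}, $\Spec(Q)$ contains every $X\subseteq[n]$ for which $X\in\Spec(P)$ or $X\cup\{n+1\}\in\Spec(P)$. Since $\Ascr\subseteq 2^{[n]}$, each $A\in\Ascr$ avoids $n+1$ and lies in $\Spec(P)$ by hypothesis, so $A\in\Spec(Q)$ and hence $\Ascr\subseteq\Spec(Q)$.

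The only genuinely delicate point is the label-bookkeeping in the forward direction: the $(n+1)$-expansion translates the vertices of $Q^+(M)$ by $\xi_{n+1}$, which amounts to adjoining $n+1$ to each of their labels, so an arbitrary membrane $M$ might convert some $A\in\Ascr$ into $A\cup\{n+1\}$. Taking $M=\Zrear$ collapses $Q^+(M)$ to the rear surface and hence leaves every set of $\Spec(Q)$ untouched, which is exactly what is needed.
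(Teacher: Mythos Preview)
Your proof is correct and follows essentially the same approach as the paper's: the forward direction uses the $(n+1)$-expansion of $Q$ with the membrane $M=\Zrear$ (so that no vertex of $\Spec(Q)$ gets shifted), and the backward direction uses the $(n+1)$-contraction together with the fact that members of $\Ascr$ avoid $n+1$. Your write-up is in fact slightly more detailed than the paper's, spelling out the label bookkeeping and invoking clause~(iv) of~\refeq{pie} explicitly.
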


  \begin{lemma} \label{lm:n+1d+1}
Let $\Ascr\subseteq 2^{[n]}$, $n':=n+1$, and $\Ascr':=\{Xn'\colon X\in
\Ascr\}$. Then $\Ascr$ is $(n,d)$-extendable if and only if $\Ascr\cup\Ascr'$
is $(n+1,d+1)$-extendable.
  \end{lemma}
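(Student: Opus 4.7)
~ The idea is to realize both extendability conditions as facts about cubillages, and then cross between dimensions $(n,d)$ and $(n+1,d+1)$ by combining the lifting property~\refeq{lift} with the $(n+1)$-expansion/contraction correspondence~\refeq{contr-exp}. The key point is that expanding at the \emph{new} color $n+1$ creates a pie whose front vertices appear both as $X$ and as $Xn'$, matching exactly the relation between $\Ascr$ and $\Ascr'$.

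For the forward implication, pick a cubillage $Q$ on $Z(n,d)$ with $\Ascr\subseteq\Spec(Q)$. Applying~\refeq{lift} with $d$ replaced by $d+1$ yields a cubillage $\hat Q$ on $Z(n,d+1)$ and a membrane $M\subset\hat Q$ with $\pi(M)=Q$; in particular $\Spec(M)=\Spec(Q)\supseteq\Ascr$. Now form $\tilde Q:=Q_{n+1}(\hat Q,M)$, a cubillage on $Z(n+1,d+1)$. By the construction of the expansion (Subsection~\SSEC{pies}), each vertex $X\in\Spec(M)$ is preserved on the front face of the new $(n+1)$-pie, and its shift $X\cup\{n+1\}$ appears on the rear face. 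Hence $\Ascr\cup\Ascr'\subseteq\Spec(\tilde Q)$, proving $(n+1,d+1)$-extendability.

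Conversely, let $\tilde Q$ be a cubillage on $Z(n+1,d+1)$ with $\Ascr\cup\Ascr'\subseteq\Spec(\tilde Q)$. For every $X\in\Ascr$ the vertices $X$ and $Xn'$ both lie in $\Spec(\tilde Q)$ and are joined by an edge of $\tilde Q$ by~\refeq{adjacency}, which is necessarily of color $n+1$. So each $X\in\Ascr$ is a vertex of the front ball $B_{n+1}$ of the pie $\Pi_{n+1}(\tilde Q)$. Applying the $(n+1)$-contraction, by~\refeq{Pi_n} and~\refeq{contr-exp}(ii) this ball descends to a membrane $M$ in the cubillage $Q':=\tilde Q/(n+1)$ on $Z(n,d+1)$, with $\Spec(M)\supseteq\Ascr$. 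Projecting $M$ along the last coordinate then gives a cubillage $\pi(M)$ on $Z(n,d)$ whose spectrum (viewed as a set-system in $2^{[n]}$) equals $\Spec(M)$, proving $(n,d)$-extendability.

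Most of the work is already packaged in the lifting and contraction/expansion machinery of Section~\SEC{membr}, so the effort reduces to careful bookkeeping of subset labels through the two projections and through the shift by $\xi_{n+1}$. The one point that is not entirely automatic is to verify, in the converse direction, that all vertices $X\in\Ascr$ really sit on the \emph{front} ball $B_{n+1}$ rather than on $B'_{n+1}=B_{n+1}+\xi_{n+1}$; this is forced by the fact that $X\subseteq[n]$ does not contain $n+1$, so $X$ is the tail (not the head) of its color-$(n+1)$ edge and therefore lies on $B_{n+1}$ by~\refeq{pie}(i).
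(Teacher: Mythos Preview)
Your proof is correct and follows essentially the same route as the paper: lift $Q$ to a membrane in $Z(n,d+1)$ via~\refeq{lift}, then apply the $(n{+}1)$-expansion for the forward direction; and for the converse, use~\refeq{adjacency} to place $\Ascr$ on the front ball of the $(n{+}1)$-pie, contract, and project the resulting membrane. Your added remark justifying why each $X\in\Ascr$ lands on $B_{n+1}$ rather than $B'_{n+1}$ is a welcome clarification that the paper leaves implicit.
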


\noindent\textbf{Proof of Lemma~\ref{lm:n+1}} ~Clearly $\Ascr$ is
$(d-1)$-separated relative to $n$ if and only if it is $(d-1)$-separated
relative to $n+1$.

If $\Ascr$ is $(n,d)$-extendable, then $\Ascr\subseteq\Spec(Q)$ for some
cubillage $Q$ on $Z=Z(n,d)$. Let $Q'$ be the $(n+1)$-extension of $Q$ using as
a membrane the rear side $\Zrear$ of $Z$ (see Sect.~\SSEC{pies} for
definitions). Then $\Ascr\subseteq \Spec(Q')$, implying that $\Ascr$ is
$(n+1,d)$-extendable.

Conversely, if $\Ascr$ is $(n+1,d)$-extendable, then $\Ascr\subseteq\Spec(Q')$
for some cubillage $Q'$ on $Z(n+1,d)$. Let $Q$ be the $(n+1)$-contraction of
$Q'$ (i.e., $Q$ is a cubillage on $Z(n,d)$ obtained by shrinking the
$(n+1)$-pie in $Q'$, cf.~\refeq{pie}). Since the sets in $\Ascr$ do not contain
the element $n+1$, their corresponding vertices in $Q'$ preserve under the
$(n+1)$-contraction operation. So $\Ascr\subseteq\Spec(Q)$ and therefore
$\Ascr$ is $(n,d)$-extendable. \hfill \qed
  \medskip

\noindent\textbf{Proof of Lemma~\ref{lm:n+1d+1}} ~Let $\Ascr$ be
$(n,d)$-extendable (in particular, $\Ascr$ is $(d-1)$-separated). Take a
cubillage $Q$ on $Z(n,d)$ with $\Ascr\subseteq\Spec(Q)$. By~\refeq{lift}, there
exist a cubillage $Q'$ on $Z(n,d+1)$ and a membrane $M$ in $Q'$ such that
$Q=\pi(M)$, where $\pi$ is the projection $\Rset^{d+1}\to\Rset^d$ as in
Sect.~\SSEC{membr}. Let $Q''$ be the $n'$-expansion of $Q'$ using $M$. Then
$Q''$ is a cubillage on $Z(n+1,d+1)$. Take the $n'$-pie $\Pi_{n'}$ in $Q''$.
Then the side $B_{n'}$ of $\Pi_{n'}$ contains the vertices of $M$, whereas the
side $B'_{n'}$ contains the vertices $Xn'$ for $X\in\Spec(M)$
(cf.~\refeq{pie}(ii)). Since $\Ascr\subseteq \Spec(M)=\Spec(B_{n'})$, we have
$\Ascr'\subseteq\Spec(B'_{n'})$, whence $\Ascr\cup\Ascr'\subseteq \Spec(Q'')$.
Thus, $\Ascr\cup\Ascr'$ is $(n+1,d+1)$-extendable.

Conversely, let $\Ascr\cup\Ascr'$ be $(n+1,d+1)$-extendable (in particular, it
is $d$-separated). Then $\Ascr\cup\Ascr'\subseteq\Spec(Q)$ for some cubillage
$Q$ on $Z(n+1,d+1)$. By~\refeq{adjacency}, any $X\in\Ascr$ must be connected
with $Xn'\in\Ascr'$ by an $n'$-edge in $Q$. This implies that $\Ascr\cup\Ascr'$
is contained in the $n'$-pie of $Q$ (in which $\Ascr$ and $\Ascr'$ lie in the
sides $B_{n'}$ and $B'_{n'}$ of $\Pi_{n'}$, respectively). Then the
$n'$-contraction operation applied to $Q$ transforms $Q$ and $\Pi_{n'}$ into a
cubillage $Q'$ on $Z(n,d+1)$ and a membrane $M$ in $Q'$, preserving the
vertices of $B_{n'}$. Hence $\Ascr\subseteq\Spec(M)$, and now taking the
cubillage $Q'':=\pi(M)$ on $Z(n,d)$, we obtain $\Ascr\subseteq\Spec(Q'')$, as
required. \hfill \qed
  \medskip

Note that Lemmas~\ref{lm:n+1} and~\ref{lm:n+1d+1} can also be used to
demonstrate an interesting class of extendable set-systems, as follows.
  \begin{prop} \label{pr:BD}
Let $k\in[n]$, $\Bscr\subseteq 2^{[k]}$, $D\subseteq \{k+1,\ldots,n\}$, and
$d:=|D|$. Let $\Cscr$ consist of the sets of the form $B\cup D'$ for all
$B\in\Bscr$ and $D'\subseteq D$. Suppose that $\Bscr$ is 2-separated. Then
$\Cscr$ is $(n,d+3)$-extendable.
  \end{prop}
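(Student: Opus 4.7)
The strategy is to walk from the ground set $[k]$ up to $[n]$ one index at a time, using Lemma~\ref{lm:n+1} to absorb indices lying in $\{k+1,\ldots,n\}\setminus D$ (which keeps the second extendability parameter fixed) and Lemma~\ref{lm:n+1d+1} to absorb indices lying in $D$ (which bumps it up by one). The second parameter starts at $3$ and must rise by exactly $|D|=d$ in total, landing precisely at $(n,d+3)$-extendability.

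The starting point is that $\Bscr$, being $2$-separated, is $(k,3)$-extendable by the Galashin purity statement~\refeq{gal}. For $j=k,k+1,\ldots,n$ set
$$
D_j:=D\cap\{k+1,\ldots,j\},\qquad \Cscr_j:=\{B\cup D'\colon B\in\Bscr,\ D'\subseteq D_j\}\subseteq 2^{[j]},
$$
so that $D_k=\emptyset$ and $\Cscr_k=\Bscr$, while $D_n=D$ and $\Cscr_n=\Cscr$. I claim by induction on $j$ that $\Cscr_j$ is $(j,\,3+|D_j|)$-extendable; the base case $j=k$ is the opening observation.

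For the inductive step from $j$ to $j+1$, two cases arise. If $j+1\notin D$, then $\Cscr_{j+1}=\Cscr_j$ and $|D_{j+1}|=|D_j|$, and Lemma~\ref{lm:n+1} promotes $(j,\,3+|D_j|)$-extendability to $(j+1,\,3+|D_{j+1}|)$-extendability with no change in the second coordinate. If $j+1\in D$, then
$$
\Cscr_{j+1}=\Cscr_j\,\cup\,\{C\cup\{j+1\}\colon C\in\Cscr_j\},
$$
which is precisely the $\Ascr\cup\Ascr'$ pattern of Lemma~\ref{lm:n+1d+1} applied with $\Ascr=\Cscr_j$ and $n'=j+1$; that lemma upgrades $(j,\,3+|D_j|)$-extendability to $(j+1,\,4+|D_j|)=(j+1,\,3+|D_{j+1}|)$-extendability. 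Setting $j=n$ delivers the conclusion.

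There is no real obstacle here; the only delicate point is that Lemma~\ref{lm:n+1d+1} always appends the new element as the maximum of the enlarged ground set, which is automatically satisfied because we scan the indices $k+1,\ldots,n$ in increasing order, so at every step the index being inserted is indeed the new maximum.
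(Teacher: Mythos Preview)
Your argument is correct and is essentially the same as the paper's: both proofs step through the indices in $\{k+1,\ldots,n\}$, invoking Lemma~\ref{lm:n+1} for indices outside $D$ and Lemma~\ref{lm:n+1d+1} for indices in $D$. The only cosmetic difference is that you build up from $[k]$ to $[n]$, whereas the paper peels down from $[n]$ to $[k]$ via induction on $n+d$.
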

  \begin{proof}
We use induction on $n+d$. When $d=0$, ~$\Cscr$ becomes $\Bscr$, and therefore
it is $(n,3)$-extendable (by~\refeq{gal}).

So assume that $d\ge 1$. Let $p$ be the maximal element in $D$. If $p<n$ then
$\Cscr$ is contained in $2^{[n-1]}$. By Lemma~\ref{lm:n+1}, $\Cscr$ is
$(n,d+3)$-extendable if and only if it is $(n-1,d+3)$-extendable, and we can
apply induction.

Now let $p=n$. Form $\Cscr':=\{X\in\Cscr\colon n\notin X\}$ and
$\Cscr'':=\{X\in\Cscr\colon n\in X\}$. Then $\Cscr'\cap\Cscr''=\emptyset$,
~$\Cscr'\cup\Cscr''=\Cscr$, and the construction of $\Cscr$ implies that
$\Cscr''=\{Xn\colon X\in\Cscr'\}$. Therefore, by Lemma~\ref{lm:n+1d+1}, $\Cscr$
is $(n,d+3)$-extendable if and only if $\Cscr'$ is $(n-1,d+2)$-extendable.
Since $\Cscr'$ consists of the sets of the form $B\cup D'$ for all $B\in\Bscr$
and $D'\subseteq D\cap [n-1]$, we can apply induction.
  \end{proof}

As a special case in this proposition, we obtain the following
 \begin{corollary} \label{cor:extend_cube}
For any $D\subseteq [n]$ with $|D|\le d$, the collection $\{X\subseteq D\}$
(forming the vertex set of a ``cube'') is $(n,d)$-extendable.
  \end{corollary}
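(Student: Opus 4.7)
The plan is to prove the corollary by induction on $|D|$, mirroring the proof of Proposition~\ref{pr:BD} and relying on Lemmas~\ref{lm:n+1} and~\ref{lm:n+1d+1}. The base case $|D|=0$ is immediate: the collection is $\{\emptyset\}$, and $\emptyset$ is a vertex of every cubillage on $Z(n,d)$ (it lies in $\Spec(\bd(Z))$ by~\refeq{bdZ}, being trivially $(d-1)$-separated from every subset), so $\{\emptyset\}$ is $(n,d)$-extendable.

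For the inductive step, suppose $|D|\ge 1$ and put $p:=\max D$. I would split into two cases according to whether $p<n$ or $p=n$. If $p<n$, the whole collection $\{X\colon X\subseteq D\}$ sits inside $2^{[n-1]}$, so by Lemma~\ref{lm:n+1} its $(n,d)$-extendability is equivalent to its $(n-1,d)$-extendability; the induction hypothesis (on $n$, with $|D|$ and $d$ unchanged) applies, since we still have $D\subseteq[n-1]$ and $|D|\le d$. If $p=n$, put $D':=D-\{n\}$ and $\Cscr':=\{X\colon X\subseteq D'\}$. Then $\{X\colon X\subseteq D\}$ decomposes cleanly as $\Cscr'\cup\{Yn\colon Y\in\Cscr'\}$, which is exactly the shape required by Lemma~\ref{lm:n+1d+1} (applied with $\Ascr=\Cscr'\subseteq 2^{[n-1]}$ and the ambient size increasing to $n'=n$). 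That lemma converts the $(n,d)$-extendability of $\{X\colon X\subseteq D\}$ into the $(n-1,d-1)$-extendability of $\Cscr'$; since $|D'|=|D|-1\le d-1$, the induction hypothesis on $|D|$ finishes this case.

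The only thing to double-check is that the recursion is well-founded: Case~1 strictly decreases $n$ while preserving $|D|\le d$, and Case~2 strictly decreases $|D|$, $n$, and $d$ together by one, preserving $|D'|\le d-1$. Since one of the two cases applies whenever $|D|\ge 1$, the recursion reaches the base case $|D|=0$ in finitely many steps. I do not foresee any real combinatorial obstacle: the argument is essentially bookkeeping on top of the two lemmas, which encapsulate all the geometric content needed.
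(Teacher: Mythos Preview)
Your argument is correct and is essentially the paper's own approach: the paper derives the corollary by invoking Proposition~\ref{pr:BD} (taking $\Bscr$ to be the cube on the three smallest elements of $D$), and you simply reproduce the inductive skeleton of that proposition's proof in this special case, using the same Lemmas~\ref{lm:n+1} and~\ref{lm:n+1d+1}.

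Two minor remarks. First, a small edge case: in Case~1 the passage from $(n,d)$- to $(n-1,d)$-extendability via Lemma~\ref{lm:n+1} tacitly requires $n-1\ge d$, so you should also record the trivial base case $n=d$ (where $Z(d,d)$ is a single cube with spectrum $2^{[d]}$, and any $2^D$ with $D\subseteq[d]$ is automatically contained in it). Second, a pleasant by-product of your specialization: your base case $|D|=0$ is the singleton $\{\emptyset\}$ and needs no separation input at all, whereas the paper's route through Proposition~\ref{pr:BD} ultimately relies on Galashin's purity result~\refeq{gal} at its own base case. So your version is, in this narrow sense, slightly more self-contained.
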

(In this case one should take as $\Bscr$ the ``cube'' on three smallest
elements of $D$.)

Thus, one cube of dimension $\le d$ within a zonotope $Z(n,d)$ can always be
extended to a cubillage. In contrast, as we have seen earlier (cf.
Corollary~\ref{cor:3points}), a triple of (duly separated) ``cubes'' of
dimension 0 need not be extendable. In light of these facts, we can address the
following open question:
  \begin{description}
\item[$(O1)$]: Whether or not any two ``cubes'' $\Cscr=\{A\cup X\colon X\subseteq D\}$ and
$\Cscr'=\{A'\cup Y\colon Y\subseteq D'\}$, where $A,A',D,D'\subset[n]$,
~$|D|,|D'|\le d$, and $\Cscr\cup\Cscr'$ is $(d-1)$-separated, can be extended
to a cubillage on $Z(n,d)$?
  \end{description}
A similar open question concerns a membrane and a cube:
   \begin{description}
\item[$(O2)$]: Whether or not any pair consisting of a membrane $M$ in $Z(n,d)$
and a ``cube'' $\Cscr=\{A\cup X\colon X\subseteq D\}$, where $A,D\subset[n]$,
~$|D|\le d$, and $\Spec(M)\cup\Cscr$ is $(d-1)$-separated, can be extended to a
cubillage on $Z(n,d)$?
  \end{description}

  \section{Inversions} \label{sec:invers}

Inversions discussed in this section arise as a natural generalization of the
classical notion of inversions in elements (permutations) of a symmetric group
$\Sscr_n$, inspired by the observation that a permutation of $[n]$ can be
interpreted as a membrane in the zonogon $Z(n,2)$. We will use two ways to
define inversions, which are shown to be equivalent. The first one is of a
geometric flavor, as follows.
  \medskip

\noindent\textbf{Definitions.} ~Consider a cubillage $Q$ on $Z=Z(n,d)$, a
membrane $M$ in $Q$, and the corresponding cubillage $Q':=\pi(M)$ on
$Z(n,d-1)$. A $d$-tuple $K\in\binom{[n]}{d}$ is called \emph{inversive}, or an
\emph{inversion}, for $M$, as well as for $Q'$, if the cube $C$ of $Q$ having
type $K$ lies \emph{before} $M$, i.e., in the region $Z^-(M)$ between $\Zfr$
and $M$ (see Sect.~\SSEC{membr} for definitions). Otherwise (when the cube
$C\in Q$ with $\tau(C)=K$ lies in the region $Z^+(M)$ between $M$ and
$\Zrear$), we say that $K$ is \emph{straight} for $M$ (and for $Q'$).
  \smallskip

An important fact is that the set of inversions for $M$ does not depend on the
choice of a cuballage $Q$ on $Z$ that contains $M$ as a membrane (see Remark~2
below); we denote this set as $\Inver(M)$, or $\Inver(Q')$.

In particular, the smallest case $\Inver(M)=\emptyset$ (the largest case
$\Inver(M)=\binom{[n]}{d}$) happens when $M=\Zfr$ (resp. $M=\Zrear$), or, in
terms of cubillages, when $Q'=\pi(M)$ becomes the standard cubillage
$\Qst_{n,d-1}$ (resp. the anti-standard cubillage $\Qant_{n,d-1}$). (Note
that~\cite{zieg} exhibits necessary and sufficient conditions on a collection
in $\binom{[n]}{d}$ to be the set of inversions for a membrane, but we do not
use this in what follows.)

The second way to define inversions relies on a natural binary relations on
cubes of a cubillage. More precisely, given a cubillage $\tilde Q$ on $Z(\tilde
n,\tilde d)$, we say that a cube $C\in\tilde Q$ \emph{immediately precedes} a
cube $C'\in\tilde Q$ if their sides $C^{\rm rear}$ and $(C')^{\rm fr}$ share a
facet (a $(\tilde d-1)$-dimensional face). Then for $C,C'\in\tilde Q$, we write
$C\prec_{\tilde Q} C'$ and say that $C$ \emph{precedes} $C'$ if there is a
sequence $C=C_0,C_1,\ldots,C_k=C'$ such that $C_{i-1}$ immediately precedes
$C_i$ for each $i$. It can be shown rather easily that the relation
$\prec_{\tilde Q}$ is a partial order (arguing in spirit of the proof of
Lemma~4.2 in~\cite{DKK} for $d=3$).

A behavior of this order under contraction operations on pies (defined in
Sect.~\SSEC{pies}) is featured as follows.
  \begin{lemma} \label{lm:prec-contr}
Let $Q^\ast$ be the $i$-contraction of a cubillage $\tilde Q$ on $Z(\tilde
n,\tilde d)$, where $i\in[\tilde n]$, and suppose that $D,D'$ are cubes in
$Q^\ast$ such that $D\prec_{Q^\ast} D'$. Then the cubes $C,C'\in\tilde Q$ with
$\tau(C)=\tau(D)$ and $\tau(C')=\tau(D')$ satisfy $C\prec_{\tilde Q} C'$.
  \end{lemma}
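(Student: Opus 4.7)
The plan is to reduce to a single immediate precedence: by transitivity of $\prec_{\tilde Q}$ and induction on the length of a chain of immediate precedences witnessing $D\prec_{Q^\ast}D'$, it suffices to prove the claim assuming $D$ immediately precedes $D'$ in $Q^\ast$. Let $F^\ast := D^{\rm rear}\cap (D')^{\rm fr}$ be the shared facet (of dimension $\tilde d-1$), and recall from~\refeq{pie} that $Q^\ast$ is formed by removing $\Pi_i-(B_i\cup B'_i)$ from $\tilde Q$---yielding two components $R\supset B_i$ and $R'\supset B'_i=B_i+\xi_i$---and then gluing $R$ with $R'-\xi_i$ along $B_i$. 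The non-$i$-cubes of $\tilde Q$ correspond bijectively to the cubes of $Q^\ast$ via the identity on $R$ and the translation by $-\xi_i$ on $R'$.

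I would then split by where $F^\ast$ sits in $Q^\ast$. If $F^\ast$ lies in the interior of the $R$-part (away from the glued interface $B_i$), then the very same facet is shared by $C$ and $C'$ in $\tilde Q$ with the same rear/front labels, so $C\prec_{\tilde Q}C'$ immediately. If $F^\ast$ lies in the $R'$-part, the argument is symmetric: translation by $\xi_i$ preserves the rear/front labels, so $C$ and $C'$ share the facet $F^\ast+\xi_i$ with the same orientation, again giving $C\prec_{\tilde Q}C'$.

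The interesting case is when $F^\ast\subset B_i$, so that one of $D,D'$ comes from $R$ and the other from $R'$. Here $F^\ast$ determines a unique $i$-cube $C^\dagger\in\Pi_i$ with $F^\ast$ as its $B_i$-face and $F^\ast+\xi_i$ as its $B'_i$-face. The $R$-cube shares $F^\ast$ with $C^\dagger$ and the $R'$-cube shares $F^\ast+\xi_i$ with $C^\dagger$. The plan is to establish the two immediate precedences $C\prec_{\tilde Q}C^\dagger$ and $C^\dagger\prec_{\tilde Q}C'$ (possibly with the roles of $C$ and $C'$ swapped, depending on the sign of $(\xi_i)_d$) and then conclude $C\prec_{\tilde Q}C'$ by transitivity.

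The main obstacle will be the rear/front matching in this third case. For $i\in\{1,n\}$ the identifications $(C^\dagger)^{\rm fr}=F^\ast$ and $(C^\dagger)^{\rm rear}=F^\ast+\xi_i$ are immediate from $B_i\subseteq\Pi_i^{\rm fr}$ (noted after~\refeq{Pi_n}); but for intermediate $i$ no such global fact holds, and one must argue locally. The key point is that the gluing in the $i$-contraction is a pure translation along $\xi_i$, which preserves rear/front orientation across the single facet $F^\ast$; thus the direction of immediate precedence at $F^\ast$ in $Q^\ast$ determines which of $D,D'$ comes from $R$ and which from $R'$, and this in turn fixes which facet of $C^\dagger$ is its front and which is its rear. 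The verification is carried out by unwinding the Minkowski-sum description $\Pi_i=B_i+[0,\xi_i]$ and follows the same line as the $d=3$ treatment in~\cite{DKK}.
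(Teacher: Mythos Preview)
Your plan matches the paper's proof almost exactly: reduce to an immediate precedence, split by where the common facet $F^\ast$ sits relative to the two glued parts, and in the boundary case insert the unique $i$-cube $C^\dagger=F^\ast+[0,\xi_i]$ to form a length-two chain $C\prec_{\tilde Q}C^\dagger\prec_{\tilde Q}C'$. The paper phrases the split as four cases (your (a), (b), then (c) $D\in R$, $D'\in R'-\xi_i$ and (d) $D\in R'-\xi_i$, $D'\in R$), but the content is the same.

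One point to correct: your parenthetical ``possibly with the roles of $C$ and $C'$ swapped, depending on the sign of $(\xi_i)_d$'' is wrong and should be dropped. The roles are \emph{never} swapped, and the sign of $(\xi_i)_d$ plays no role. The front/rear assignment on $C^\dagger$ is not something you need to determine in advance from $(\xi_i)_d$; it is forced by the simple fact that when two cubes of a cubillage share a facet, that facet lies on the rear side of one and the front side of the other. Concretely, in sub-case (c) you have $F^\ast\subset C^{\rm rear}$, hence $F^\ast\subset (C^\dagger)^{\rm fr}$, and $F^\ast+\xi_i\subset (C')^{\rm fr}$, hence $F^\ast+\xi_i\subset (C^\dagger)^{\rm rear}$; in sub-case (d) the two identifications interchange, but in \emph{both} sub-cases the outcome is $C\prec_{\tilde Q}C^\dagger\prec_{\tilde Q}C'$. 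So there is no genuine ``main obstacle'' here --- the local argument you anticipate is just this one-line observation, and the paper states the resulting identities without further comment.
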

  \begin{proof}
Let $B_i,R,R'$ be as in~\refeq{pie}(i),(ii) for the $i$-pie $\Pi_i$ in $\tilde
Q$. It suffices to consider the case when $D$ immediately precedes $D'$. Then
four cases are possible: (a) both $D,D'$ lie in the part $R$ of $Q^\ast$, (b)
both $D,D'$ lie in the part $R'-\xi_i$ of $Q^\ast$ corresponding to $R'$ in
$\tilde Q$, (c) $D$ lies in $R$, and $D'$ in $R'-\xi_i$, or (d) $D$ lies in
$R'-\xi_i$, and $D'$ in $R$.

In case~(a), we have $C=D$ and $C'=D'$, while in case~(b), $C=D+\xi_i$ and
$C'=D'+\xi_i$. So in both cases $C$ immediately precedes $C'$. In case~(c), we
have $C=D$ and $C'=D'+\xi_i$. Also $D^{\rm rear}$ and $(D')^{\rm fr}$ share a
facet $F$ contained in $B_i$. Therefore, the pie $\Pi_i$ has the $i$-cube $C''$
that is the sum of $F$ and the segment $[0,\xi_i]$. One can see that $C^{\rm
rear}\cap (C'')^{\rm fr}=F$ and $(C'')^{\rm rear}\cap (C')^{\rm fr}=F+\xi_i$.
Then $C$ immediately precedes $C''$, and $C''$ immediately precedes $C'$. This
implies $C\prec_{\tilde Q} C'$. Finally, in case~(d), $C=D+\xi_i$ and $C'=D'$.
Also $D^{\rm rear}$ and $(D')^{\rm fr}$ share a facet $F$ in $B_i$. Again,
$\Pi_i$ has the $i$-cube $C''$ that is the sum of $F$ and the segment
$[0,\xi_i]$. But now we have $C^{\rm rear}\cap (C'')^{\rm fr}=F+\xi_i$ and
$(C'')^{\rm rear}\cap (C')^{\rm fr}=F$. Then $C\prec_{\tilde Q}C''\prec_{\tilde
Q} C'$, implying $C\prec_{\tilde Q} C'$, as required.
  \end{proof}

Now return to $Q,M,Q'$ as above. Fix a cube $C\in Q$ and let $K:=\tau(C)$.
Suppose we apply to $Q$ the $i$-contraction operation with $i\in[n]-K$. One can
see that under this operation $M$ turns into a membrane $M'$ in the resulting
cubillage $Q/i$, and comparing the location of $C$ relative to $M$ with that of
the ``image'' of $C$ relative to $M'$, one can see that the status of $K$
preserves, i.e., $K$ is inversive for $M'$ if and only if so is for $M$.

By applying, step by step, the $i$-contraction operations to all $i\in[n]-K$,
we produce from $Q$ the cubillage $\hat Q:=Q/([n]-K)$ consisting of a single
cube $\hat C$ having type $K$. Accordingly, $M$ and $Q'$ turn into the membrane
$\hat M:=M/([n]-K)$ in $\hat Q$ and its projection $\hat Q':=\pi(\hat M)$,
respectively. Since $\hat Q$ has exactly two membranes, namely, ${\hat C}^{\rm
fr}$ and ${\hat C}^{\rm rear}$, and since the status of $K$ preserves during
the contraction process, we can conclude that
  \begin{numitem1} \label{eq:status}
if $K$ is inversive (straight) for $M$, then the reduced membrane $\hat
M:=M/([n]-K)$ is isomorphic to the rear (resp. front) side of a cube of type
$K$.
  \end{numitem1}

Assuming that $K$ consists of elements $k_1<\cdots< k_d$, we will write
$Z(K,d-1)$ for the zonotope generated by $\xi'_p:=\pi(\xi_{k_p})$,
$p=1,\ldots,d$ (where, as before, $\xi_\bullet$ is a generator from $\Xi$). In
view of $|K|=d$, there exist exactly two cubillages on $Z':=Z(K,d-1)$, namely,
the standard and anti-standard cubillages, denoted as $\Qst_{K,d-1}$ and
$\Qant_{K,d-1}$, respectively (which correspond to the standard and
anti-standard cubillages in $Z(d,d-1)$).

Since $\Qst_{K,d-1}$ and $\Qant_{K,d-1}$ are the projections of $\Cfr$ and
$\Crear$, respectively, where $C$ is a cube of type $K$ (viz. $C=Z(K,d)$),
\refeq{status} implies that
  \begin{numitem1} \label{eq:inv-antist}
$K\in\binom{[n]}{d}$ is an inversion for a membrane $M$ in $Z(n,d)$ if and only
if $\pi(M/([n]-K))$ is the anti-standard cubillage $\Qant_{K,d-1}$ on
$Z(K,d-1)$.
  \end{numitem1}

This and Lemma~\ref{lm:prec-contr} lead to a description of inversions for $M$
in terms of the partial order $\prec_M$, giving the second (``intrinsic'') way
to characterize $\Inver(M)$. Following~\cite{MS}, for $K\in\binom{[n]}{d}$,
define $\Pack(K)$ to be the set $\{K-i\colon i\in K\}$ of $(d-1)$-element
subsets of $K$, called the \emph{packet} of $K$. Then each $K'\in\Pack(K)$ is
the type of some cube in $M$. For convenience, we use the same notation
$\prec_M$ for the corresponding types; so if $C,C'\in M$ and $C\prec_M C'$, we
may write $\tau(C)\prec_M\tau(C')$.
  \begin{prop} \label{pr:lex-antilex}
Let $K\in\binom{[n]}{d}$ consist of elements $k_1<\cdots<k_d$ and let $M$ be a
membrane in $Z(n,d)$. Then the elements of $\Pack(K)$ occur in the
lexicographic order
  $$
  (K-k_d)\prec_M (K-k_{d-1})\prec_M\cdots\prec_M (K-k_1)
    $$
if $K$ is straight for $M$, and in the anti-lexicographic order
  $$
  (K-k_1)\prec_M (K-k_2)\prec_M\cdots\prec_M (K-k_d)
    $$
if $K$ is inversive for $M$.
  \end{prop}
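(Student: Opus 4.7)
Plan:

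The approach is to reduce to the single-cube case via iterated contractions and then verify the ordering directly in the standard and anti-standard cubillages on $Z(K,d-1)$.

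For each $i\in[n]\setminus K$, the $i$-contraction $Q\mapsto Q/i$ preserves the status of $K$ (inversive vs.\ straight) for $M$, by the argument given just before \refeq{inv-antist}. Iterating over all $i\in[n]\setminus K$ yields the single-cube cubillage $\hat Q = Q/([n]\setminus K)$ of type $K$, together with the reduced membrane $\hat M$. By \refeq{inv-antist}, $\pi(\hat M)=\Qst_{K,d-1}$ if $K$ is straight for $M$, and $\pi(\hat M)=\Qant_{K,d-1}$ if $K$ is inversive. Viewing $M$ and $\hat M$ as cubillages on their respective projections and applying Lemma~\ref{lm:prec-contr} iteratively, any $\prec_{\hat M}$-chain among the cubes of $\hat M$ lifts to the corresponding $\prec_M$-chain in $M$. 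Since the $d$ cubes of $\hat M$ have types exactly $\Pack(K)$, the problem reduces to determining the $\prec$-order on these cubes inside $\Qst_{K,d-1}$ and $\Qant_{K,d-1}$.

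The central symmetry $x\mapsto\sum_i\xi'_i - x$ of $Z(K,d-1)$ exchanges $\Qst_{K,d-1}$ with $\Qant_{K,d-1}$ and swaps the front and rear sides of every cube, hence reverses $\prec$; so it is enough to treat $\Qst_{K,d-1}$. Relabel $K=[d]$. A direct sign computation using \refeq{positive} identifies the front side of $Z([d],d)$: for each $j$, the front-side facet of type $[d]-j$ is the one with $\lambda_j=0$ if $d-j$ is even and $\lambda_j=1$ if $d-j$ is odd. Projecting, the cube of $\Qst_{d,d-1}$ of type $[d]-j$ has bottom $X_j=\emptyset$ when $d-j$ is even and $X_j=\{j\}$ when $d-j$ is odd. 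The same sign machinery, applied now inside a single $(d-1)$-cube of type $\tau=\{i_1<\cdots<i_k\}$, yields the following parity rule: the face in direction $i_p$ lies in the front side of that cube iff $k-p$ is even. With this rule in hand, one verifies for each $j=1,\dots,d-1$ that the cubes of types $[d]-(j+1)$ and $[d]-j$ share a common $(d-2)$-face which lies in the rear of the first and the front of the second; this exhibits the immediate-precedence chain $[d]-d\prec[d]-(d-1)\prec\cdots\prec[d]-1$ in $\Qst_{d,d-1}$, which is the full $\prec$-order since there are only $d$ cubes.

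The main technical step is extracting the parity rule from \refeq{positive}; once it is established, the identification of the standard cubillage, the bottom vertices $X_j$, and the verification that consecutive cubes meet on the correct rear/front sides all reduce to clean combinatorial bookkeeping with types and bottoms.
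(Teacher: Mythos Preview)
Your proposal is correct and follows essentially the same route as the paper: reduce via iterated contractions and Lemma~\ref{lm:prec-contr} to the single-cube case, then analyze $\Qst_{d,d-1}$ and $\Qant_{d,d-1}$ directly by a determinant sign computation based on~\refeq{positive}. The one genuine difference is your use of the central symmetry $x\mapsto\sum_i\xi'_i-x$ to deduce the anti-standard case from the standard one; the paper instead redoes the computation for $\Qant$ around the interior vertex $Y=\{i:d-i\text{ even}\}$, so your symmetry argument is a clean shortcut that halves the work. Your ``parity rule'' packaging and the paper's explicit determinant $D(\varphi_1,\ldots,\varphi_{i-1},\varphi_{i+2},\ldots,\varphi_d,\bar\varphi_{i+1})$ are the same computation phrased differently: both detect on which side of the common facet the vector $\pm\varphi_{i+1}$ points, and both reduce to the positivity in~\refeq{positive} after accounting for the sign $(-1)^{d-i-1}$ coming from the parity of $i+1$ in $X$.
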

  \begin{proof}
In view of Lemma~\ref{lm:prec-contr}, the required relations for $\prec_M$
would follow from similar relations for $\prec_{M'}$, where $M':=M/([n]-K)$.
Moreover, since the relations $\prec_{M'}$ and $\prec_{\pi(M')}$ on $\Pack(K)$
are the same, it suffices to consider cubillages on $Z(K,d-1)$, or,
equivalently, on $Z(d,d-1)$. In other words, we have to show that for the sets
$A_i:=[d]-i$, $i=1,\ldots,d$:
  \begin{eqnarray}
  &&A_d\prec' A_{d-1}\prec'\cdots\prec' A_1, \quad\mbox{and} \label{eq:lexAd}
                            \\
  &&A_1\prec'' A_2\prec''\cdots\prec'' A_d,  \label{eq:antilexAd}
  \end{eqnarray}
where $\prec'$ ($\prec''$) denotes the order in $Q':=\Qst_{d,d-1}$ (resp.
$Q'':=\Qant_{d,d-1}$).

To show this, we first specify the spectra of $Q'$ and $Q''$. Let $X$ ($Y$) be
the set of elements $i\in[d]$ with $d-i$ odd (resp. even). Note that $X$ and
$Y$ are not $(d-2)$-separated from each other; so one of $X,Y$ belongs to
$\Spec(Q')$, and the other to $\Spec(Q'')$ (taking into account that each of
$\Spec(Q')$ and $\Spec(Q'')$ is $(d-2)$-separated and that $|\Spec(Q')|
=|\Spec(Q'')|=\binom{d}{0}+\binom{d}{1}+\cdots \binom{d}{d-1}=2^d-1$;
cf.~\refeq{corresp} and~\refeq{spec}). Using $\Spec(Q')=\Spec(\Zfr)$ and
$\Spec(Q'')=\Spec(\Zrear)$ for the ``cube'' $Z=Z(d,d)$ and
considering~\refeq{Zfr-Zrear}(ii),(iii), one can conclude that
   $$
 X\in\Spec(Q') \quad\mbox{and} \quad Y\in\Spec(Q'').
   $$

Let us prove~\refeq{lexAd}. The cubillage $Q'$ is formed by $d$ cubes
$C_1,\ldots,C_d$ of types $A_1,\ldots,A_d$, respectively, each of which must
contain the unique vertex of $Q'$ lying in the interior of $Z'=Z(d,d-1)$,
namely, the vertex $X$ (viz. $\sum(\pi(\xi_i)\colon i\in X)$). It follows that
in the digraph $G_{Q'}$ (defined in Sect.~\SEC{zon-cub}),
   \begin{numitem1} \label{eq:GQX}
the vertex $X$ is incident to $d$ edges $a_1,\ldots,a_d$ of $G_{Q'}$, where
each $a_i$ is an $i$-edge, and $a_i$ \emph{enters} (resp. \emph{leaves}) $X$ if
$i\in X$ (resp. $i\in[d]-X$);
  \end{numitem1}
  \begin{numitem1} \label{eq:Ci}
for $i=1,\ldots,d$, the cube $C_i$ contains all edges in
$E:=\{a_1,\ldots,a_d\}$ except for $a_i$.
   \end{numitem1}

Consider ``consecutive'' cubes $C_i,C_{i+1}$ ($1\le i<d$). They share a facet,
namely, the one lying in the hyperplane $H_i$ spanned by the edge set
$E-\{a_i,a_{i+1}\}$. The required relation $A_{i+1}\prec' A_i$ in~\refeq{lexAd}
can be reformulated as:
   \begin{description}
\item[($\ast$)] when seeing in the direction of the last coordinate vector,
the cube $C_i$ is located \emph{behind} $H_i$ (whereas $C_{i+1}$ is located
\emph{before} $H_i$).
  \end{description}

  To see~$(\ast)$, for $j=1,\ldots, d$, denote $\pi(\xi_j)$ by $\varphi_j$, and
define the vector $\bar\varphi_j$ to be $-\varphi_j$ if $j\in X$, and
$\varphi_j$ otherwise. We write $D(\beta,\ldots,\beta')$ for the determinant of
the matrix formed by a sequence $\beta,\ldots,\beta'$ of $d-1$ column vectors
in $\Rset^{d-1}$. Note that $(\ast)$ says that the edge $a_{i+1}$ of $C_i$ is
located behind $H_i$ (and the edge $a_i$ of $C_{i+1}$ before $H_i$). One can
realize that this location corresponds to the relation
   \begin{description}
\item[($\ast\ast$)] ~$D:=D(\varphi_1,\varphi_2,\ldots,\varphi_{i-1},\varphi_{i+2},
\varphi_{i+3},\ldots,\varphi_d,\bar\varphi_{i+1})>0$.
   \end{description}

Now validity of~$(\ast\ast)$ follows from
   \begin{multline*}
D=(-1)^{d-i-1} D(\varphi_1,\ldots,\varphi_{i-1},\bar\varphi_{i+1},
\varphi_{i+2},\ldots,\varphi_d)   \\
   =D(\varphi_1,\ldots,\varphi_{i-1},\varphi_{i+1},\varphi_{i+2},\ldots,\varphi_d)>0
   \end{multline*}
(taking into account~\refeq{positive} and the fact that
$\bar\varphi_{i+1}=-\varphi_{i+1}$ if and only if $d-i-1$ is odd).

To show~\refeq{antilexAd}, we argue in a similar way, replacing~\refeq{GQX} by:
  \begin{numitem1}
in the graph $G_{Q''}$, the vertex $Y$ is incident to $d$ edges
$b_1,\ldots,b_d$, where each $b_i$ is an $i$-edge, and $b_i$ enters (resp.
leaves) $Y$ if $i\in Y$ (resp. $i\in[d]-Y$).
  \end{numitem1}

Using this and the fact that $Y$ if formed by elements $i\in[d]$ with $d-i$
even, one shows that for each $i$, the cube $C_i$ of type $A_i$ is located
\emph{before} the hyperplane separated $C_i$ and $C_{i+1}$ (cf.~$(\ast)$),
and~\refeq{antilexAd} follows.
  \end{proof}

\noindent \textbf{Remark 2.} The above proposition implies that the
``geometric'' definition of $\Inver(M)$ (given in the beginning of this
section) does not depend on the choice of a cubillage containing $M$ as a
membrane. Next, for a membrane $M$ in $Z(n,d)$, we alternatively could give a
``packet'' definition for straight and inversive tuples $\binom{[n]}{d}$ in a
spirit of the statement in this proposition, and then come to the ``geometric''
characterization by reversing reasonings in the above proof. This alternative
way to define $\Inver(M)$ matches the classical definition due to Manin and
Schechtman (cf. Theorem~3 in~\cite{MS}). Recall that they introduced a ``packet
admissible'' total order $\prec$ on $\binom{[n]}{d-1}$, which means that for
each tuple $K\in\binom{[n]}{d}$, the elements of ${\rm Pac}(K)$ become ordered
by $\prec$ either lexicographically or anti-lexicographically, and in the
latter case, $K$ is said to be an inversion for $(\binom{[n]}{d-1},\prec)$.
(Compare $\prec$ with $\prec_M$.) \smallskip

Note also that the method of proof of Proposition~\ref{pr:lex-antilex} enables
us to reveal one more useful fact.
  \begin{prop} \label{pr:inv-str}
Let $M$ be a membrane in $Z(n,d)$ and let $K\in\binom{[n]}{d}$ consist of
elements $k_1<\cdots <k_d$. Then:
   \begin{itemize}
\item[{\rm(i)}] $K$ is inversive for $M$ if and only if there is $X\in \Spec(M)$
such that $X\cap K=\{k_i\colon d-i$ odd$\}=:K^{\rm odd}$;
\item[{\rm(ii)}] $K$ is straight for $M$ if and only if there is $Y\in \Spec(M)$
such that $Y\cap K=\{k_i\colon d-i$ even$\}=:K^{\rm even}$.
   \end{itemize}
   \end{prop}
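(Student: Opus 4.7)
The plan is to reduce to the single-cube base case by applying $i$-contractions for every $i\in[n]-K$, mirroring the derivation of~\refeq{inv-antist}. Two invariants need to be tracked. First, as already noted in the discussion preceding~\refeq{inv-antist}, the inversive-versus-straight status of $K$ is preserved under $i$-contraction for $i\notin K$. Second, for any fixed $T\subseteq K$, the property ``there exists $X\in\Spec(M)$ with $X\cap K=T$'' is preserved: by the description of $\Spec(M/i)$ analogous to~\refeq{pie}(iv) (its elements are precisely those $X\subseteq[n]-i$ for which $X$ or $X\cup\{i\}$ lies in $\Spec(M)$), and since $i\notin K$, a witness $X\in\Spec(M)$ descends to $X\setminus\{i\}\in\Spec(M/i)$ with the same $K$-intersection, and conversely any witness in $\Spec(M/i)$ lifts to one in $\Spec(M)$ with the same $K$-intersection.

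After all the contractions, we reach the membrane $\hat M:=M/([n]-K)$ in the single-cube cubillage $\hat Q$ on $Z(K,d)$. By~\refeq{inv-antist}, $\hat M=\hat C^{\rm rear}$ precisely when $K$ is inversive for $M$, and $\hat M=\hat C^{\rm fr}$ precisely when $K$ is straight. Both (i) and (ii) therefore reduce to the following base-case assertion: in the single cube $Z(K,d)$, the set $K^{\rm odd}$ lies in exactly one of $\Spec(\hat C^{\rm fr})$, $\Spec(\hat C^{\rm rear})$ (namely the one matching the claim), and $K^{\rm even}$ in the other.

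The base case is a direct application of~\refeq{Zfr-Zrear} to $Z(K,d)$. Regarded as a subset of the ordered ground set $K=\{k_1<\cdots<k_d\}$, the set $K^{\rm odd}=\{k_i\colon d-i\text{ odd}\}$ is a union of isolated singletons within $K$, so the number of its pieces and whether it contains the extremes $k_1,k_d$ are entirely determined by the parity of $d$. Matching this shape against~\refeq{Zfr-Zrear}(ii),(iii) (applied with $[n]$ replaced by $K$) pins down which of $\Spec(\hat C^{\rm fr})$ and $\Spec(\hat C^{\rm rear})$ contains $K^{\rm odd}$; a symmetric analysis handles $K^{\rm even}$. The main technical burden is this parity bookkeeping, splitting into the cases $d$ even and $d$ odd, but each case is a brief routine check. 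Conceptually, this proposition is a refinement of Proposition~\ref{pr:lex-antilex}: rather than the entire order on $\Pack(K)$, it records the single witness vertex (namely the interior vertex of the reduced single-cube spectrum) that already played the role of $X$ or $Y$ in the proof of that proposition.
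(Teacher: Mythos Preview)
Your proof is correct and follows essentially the same approach as the paper's: contract away $[n]-K$, use that both the inversive/straight status of $K$ and the existence of a spectrum witness with prescribed $K$-intersection are preserved under these contractions, and then settle the single-cube base case. The only cosmetic difference is that you invoke~\refeq{Zfr-Zrear} directly for the base case and spell out the descent direction of the spectrum-witness invariance, whereas the paper handles the base case by pointing back to the proof of Proposition~\ref{pr:lex-antilex} and writes only the lifting direction explicitly.
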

  \begin{proof}
~Let $[n]-K=\{j_1,j_2,\ldots,j_{n-d}\}$ and form the sequence
$M_0=M,M_1,\ldots,M_{n-d}$ of membranes in the corresponding zonotopes, where
$M_i=(M_{i-1})/j_i$. So $M':=M_{n-d}$ is a membrane in the final zonotope
$Z':=Z(K,d)$ (a single cube). We know that if $K$ is inversive for $M$, then
$M'=(Z')^{\rm rear}$ and $M'$ contains the vertex $K^{\rm odd}$, whereas if $K$
is straight for $M$, then $M'=(Z')^{\rm fr}$ and $M'$ contains the vertex
$K^{\rm even}$ (cf. the proof of Proposition~\ref{pr:lex-antilex}). Now the
result follows by observing that for $1\le i\le n-d$, if the membrane $M_i$ has
a vertex $A$, then the previous membrane $M_{i-1}$ has a vertex $A'$ of the
form $A$ or $A\cup\{j_i\}$.
  \end{proof}

As a consequence, we obtain the following result.
  \begin{theorem} \label{tm:MMM}
Let $M_1,\ldots,M_p$ be membranes in $Z(n,d)$ such that $\Inver(M_1)\subset
\cdots\subset \Inver(M_p)$. Then the collection $\Spec(M_1)\cup \ldots \cup
\Spec(M_p)$ is $(d-1)$-separated.
  \end{theorem}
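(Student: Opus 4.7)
The plan is to reduce the theorem to a pairwise statement about two membranes and then, given a hypothetical failure of $(d-1)$-separation between $X \in \Spec(M_i)$ and $Y \in \Spec(M_j)$, extract a single $d$-element witness that violates the inclusion $\Inver(M_i) \subseteq \Inver(M_j)$ via Proposition~\ref{pr:inv-str}.

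First I would note that it suffices to consider pairs $X \in \Spec(M_i)$, $Y \in \Spec(M_j)$ with $i < j$. Pairs lying inside a single $\Spec(M_k)$ are already $(d-2)$-separated (hence $(d-1)$-separated), since $\pi(M_k)$ is a cubillage on $Z(n,d-1)$ and \refeq{corresp} applied to this lower dimension identifies $\Spec(M_k) = \Spec(\pi(M_k))$ as a maximal $(d-2)$-separated collection. Now assume toward a contradiction that some $X \in \Spec(M_i)$ and $Y \in \Spec(M_j)$ are not $(d-1)$-separated. By definition there exist indices $k_0 < k_1 < \cdots < k_d$ in $[n]$ such that the elements $k_{2s}$ all lie in one of $X-Y, Y-X$ while the $k_{2s+1}$ all lie in the other; call this \emph{Case~A} when $k_{2s} \in X-Y$, and \emph{Case~B} otherwise.

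The key step is to isolate a single $d$-element subset $K^* \subseteq \{k_0, \ldots, k_d\}$, obtained by dropping either $k_0$ or $k_d$, such that with respect to its induced ordering one has both $X \cap K^* = (K^*)^{\rm odd}$ and $Y \cap K^* = (K^*)^{\rm even}$. A short parity bookkeeping, tracking how deleting $k_0$ or $k_d$ shifts the positions of the remaining $k_j$ relative to the definition of $K^{\rm odd}$ in Proposition~\ref{pr:inv-str}, confirms that in Case~A one takes $K^* = \{k_1, \ldots, k_d\}$ when $d$ is odd and $K^* = \{k_0, \ldots, k_{d-1}\}$ when $d$ is even; in Case~B the choice is swapped. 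Given such a $K^*$, part~(i) of Proposition~\ref{pr:inv-str} applied to $X \in \Spec(M_i)$ yields $K^* \in \Inver(M_i)$, while part~(ii) applied to $Y \in \Spec(M_j)$ yields that $K^*$ is straight for $M_j$, i.e., $K^* \notin \Inver(M_j)$. This contradicts $\Inver(M_i) \subseteq \Inver(M_j)$ and completes the proof.

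The main obstacle is purely clerical: verifying that the four parity/case combinations each admit the right choice between $K = \{k_1,\ldots,k_d\}$ and $K' = \{k_0,\ldots,k_{d-1}\}$ so that the intersections with $X$ and $Y$ align with $(K^*)^{\rm odd}$ and $(K^*)^{\rm even}$ in the required way. One can package this uniformly by observing that in $K$ an element $k_j$ occupies the position that contributes to $K^{\rm odd}$ iff $d-j$ is odd, whereas in $K'$ the same $k_j$ contributes to $(K')^{\rm odd}$ iff $d-j$ is even; matching this against the alternation $j \bmod 2$ of the $(d+1)$-sequence produces exactly one valid choice of $K^*$ in each case. Once this indexing check is done, the conceptual content of the proof is a single invocation of Proposition~\ref{pr:inv-str}.
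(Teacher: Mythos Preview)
Your proof is correct and follows essentially the same route as the paper's: assume a violating pair $X\in\Spec(M_i)$, $Y\in\Spec(M_j)$, extract a length-$(d+1)$ alternating sequence, and invoke Proposition~\ref{pr:inv-str} on a $d$-element subsequence to contradict the inclusion of inversion sets.

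The only stylistic difference is that the paper sidesteps your parity bookkeeping. Instead of selecting a single $K^*$ according to case/parity, it simply considers \emph{both} $d$-subsets $K=\{i_1,\ldots,i_d\}$ and $K'=\{i_2,\ldots,i_{d+1}\}$ at once and observes (via Proposition~\ref{pr:inv-str}) that whichever of $M_i,M_j$ has $K$ inversive has $K'$ straight, and vice versa; hence neither $\Inver(M_i)\subset\Inver(M_j)$ nor the reverse can hold. This two-set trick is marginally slicker since it absorbs the case split, while your single-$K^*$ version makes the dependence on the assumed direction $\Inver(M_i)\subseteq\Inver(M_j)$ explicit. Both arguments amount to the same application of Proposition~\ref{pr:inv-str}.
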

    \begin{proof}
Suppose that this is not so. Then for some $i<j$, there exist $X\in\Spec(M_i)$
and $Y\in\Spec(M_j)$ that are not $(d-1)$-separated from each other. Therefore,
there exist elements $i_1<i_2<\cdots <i_{d+1}$ of $[n]$ that alternate in $X-Y$
and $Y-X$. Let for definiteness the elements $i_k$ with $k$ odd are contained
in $X-Y$ (and the other in $Y-X$). Consider the $d$-element sets
$K:=\{i_1,\ldots,i_d\}$ and $K':=\{i_2,\ldots,i_{d+1}\}$. Then, by
Proposition~\ref{pr:inv-str}, $K$ is straight for one, and inversive for the
other membrane among $M_i,M_j$. But the behavior of $M_i,M_j$ relative to $K'$
is opposite. Thus, neither $\Inver(M_i)\subset\Inver(M_j)$ nor
$\Inver(M_j)\subset\Inver(M_i)$ is possible; a contradiction.
    \end{proof}

We finish this section with two applications.
\smallskip

1) Let $M,N$ be two membranes with $\Inver(M)\subset \Inver(N)$ in $Z=Z(n,d)$.
By Theorem~\ref{tm:MMM}, the collection $\Cscr:=\Spec(M)\cup\Spec(N)$ is
$(d-1)$-separated; so it is tempting to hope that $\Cscr$ is extendable to a
maximal by size $(d-1)$-separated set-system, or, equivalently, that there
exists a cubillage $Q$ on $Z$ containing both membranes. We can try to
construct such a $Q$ by filling the region $Z^-(M)$ (between $\Zfr$ and $M$)
with a ``partial'' cubillage $Q'$, and filling the region $Z^+(N)$ (between $N$
and $\Zrear$) with a ``partial'' cubillage $Q''$ (such $Q',Q''$ exist
by~\refeq{lift}). But what is about the rest of $Z$ between $M$ and $N$,
denoted as $Z(M,N)$? (Note that $\Inver(M)\subset\Inver(N)$ provides that $M$
lies within $Z^-(N)$.)

Let us say that $M,N$ are \emph{agreeable} if the collection
$\Spec(M)\cup\Spec(N)$ is $(n,d)$-extendable, i.e., a cubillage on $Z$
containing both $M,N$ (equivalently, a ``partial'' cubillage filling $Z(M,N)$)
does exist. Ziegler~\cite{zieg} explicitly constructed two membranes $M,N$ in
the zonotope $Z(8,4)$ such that $\Inver(M)\subset\Inver(N)$ but $M,N$ are not
agreeable (in our terms) . This together with Theorem~\ref{tm:MMM} implies that
the set system $\Sbold_{8,4}$ is not pure (the latter fact was omitted
in~\cite{zieg}). (Compare $(O2)$ in the end of the previous section that
considers the union of a membrane and a cube.)
  \smallskip

2) In light of the above result for $d=4$, Ziegler asked about the existence of
two non-agreeable membranes in dimension 3. Answering this question, Felsner
and Weil~\cite{FW} proved that for an arbitrary $n$, any two membranes $M,N$
with $\Inver(M)\subset\Inver(N)$ in $Z(n,3)$ are agreeable. Note that the proof
in~\cite{FW} attracted a non-trivial combinatorial techniqies. An alternative
proof immediately follows from Galashin's result in~\cite{gal} (mentioned
in~\refeq{gal}) and Theorem~\ref{tm:MMM}.


\end{document}